\g@addto@macro\th@plain{\thm@headpunct{}}
\newtheorem{theorem}{Theorem}[section]
\newtheorem{corollary}[theorem]{Corollary}
\newtheorem{lemma}[theorem]{Lemma}
\newtheorem{remark}[theorem]{Remark}
\newcommand{\xx}{ {\textbf x} }
\newcommand{\ab}{ {\textbf a} }
\newcommand{\bb}{ {\textbf b} }
\newcommand{\cb}{ {\textbf c} }
\newcommand{\dd}{ {\textbf d} }
\newcommand{\yy}{ {\textbf y} }
\newcommand{\zz}{ {\textbf z} }
\newcommand{\ee}{ {\textbf e} }
\newcommand{\ub}{ {\textbf u} }
\newcommand{\cbo}{ {\textbf c}^\bot }
\newcommand{\VV}{ \Omega }
\newcommand{\RR}{\mathbb{R}}
\newcommand{\KK}{\mathbb{K}}
\newcommand{\LL}{\mathbb{L}}
\newcommand{\PP}{\mathbb{P}}
\newcommand{\En}{\mathbb{E}}
\newcommand{\TT}{\mathcal{T}}
\newcommand{\tr}{\mathrm{tr}\,}
\newcommand{\Trace}{\mathrm{Trace}\,}
\newcommand{\DDet}{\mathrm{Det}}
\providecommand{\norm}[1]{\lVert#1\rVert}
\providecommand{\scalar}[1]{\left\langle#1\right\rangle}
\title{Multiplicative Cauchy functional equation on symmetric cones}
\author[B. Ko\l{}odziejek]{Bartosz Ko\l{}odziejek}
\address{Faculty of Mathematics and Information Science\\Warsaw University of Technology\\Pl. Politechniki 1\\00-661 Warszawa, Poland}
\email{kolodziejekb@mini.pw.edu.pl}
\subjclass[2010]{Primary 39B52.}
\keywords{Cauchy functional equation, symmetric cones, triangular group, Jordan triple product}
\begin{document}

\begin{abstract}
We solve the logarithmic Cauchy functional equation in the symmetric cone with respect to two different multiplication algorithms. We impose no regularity assumptions on respective functions. 
\end{abstract}
\maketitle

\section{Introduction}
The multiplicative Cauchy functional equation in the space of real square matrices of the form
\begin{align*}
f(\xx)f(\yy)=f(\xx\cdot \yy),
\end{align*}
where $\xx$ and $\yy$ are matrices and $f$ is real-valued function, was studied by \citet{Golab}, and other contributors \citet{Hosszu, Kuczma1959, KuchZaj, Djoko1970}. It is known that the general solution is of the form
\begin{align*}
f(\xx)=g(\DDet(\xx)),
\end{align*}
where $g$ is a multiplicative function, that is $g(a)g(b)=g(ab)$ for any positive $a$ and $b$. The cone of symmetric positive definite matrices $\VV_+$ is not closed under multiplication $\xx\cdot\yy$, therefore it is common to consider on $\VV_+$ multiplicative functions of the form
\begin{align*}
f(\xx)f(\yy)=f(\xx^{1/2}\cdot \yy\cdot \xx^{1/2}),\quad (\xx,\yy)\in\VV_+^2.
\end{align*}
Multiplication $\xx^{1/2}\cdot \yy\cdot \xx^{1/2}$ in the cone $\VV_+$ is closely related to Jordan triple product $(\xx,\yy)\mapsto\xx\cdot\yy\cdot\xx$ and can be naturally replaced by more abstract operations. In this paper we are interested in finding all real functions of symmetric positive definite matrices $\VV_+$, satisfying
\begin{align}\label{wC}
f(\xx)+f(w_I\cdot \yy\cdot w_I^T)=f(w_\xx\cdot \yy \cdot w_\xx^T),\quad (\xx,\yy)\in\VV_+^2,
\end{align}
where $w_\xx$ is a measurable map such that, $\xx=w_\xx\cdot w_\xx^T$ for any $\xx\in\VV_+$, and $I$ is the identity matrix. Such maps are called multiplication algorithms. 

Our interest in this functional equation stems from investigations of characterization problems for probabilistic measures concentrated on $\VV_+$ or more generally on symmetric cones - see \citet{BW2002}, \citet{HaLaZi2008}, \citet{Bo2005,Bout2009}, \citet{BK2010,BK2013}.

We find all functions satisfying \eqref{wC} with respect to two basic multiplication algorithms. First one is connected to Jordan triple product: $w^{(1)}_\xx=\xx^{1/2}$, where $\xx^{1/2}$ is the unique symmetric positive definite square root of $\xx=\xx^{1/2}\cdot\xx^{1/2}$. The second is $w^{(2)}_\xx=t_\xx$, where $t_\xx$ is the lower triangular matrix in the Cholesky decomposition of $\xx$. We impose no regularity assumptions on $f$. This problem is naturally generalized to irreducible symmetric cones setting.

Functional equations for $w^{(1)}$ were already considered in \citet{BW2003} for differentiable functions and in \citet{Molnar2006} for continuous functions of real or complex Hermitian positive definite matrices of rank strictly greater than $2$. Without any regularity assumptions it was solved on the Lorentz cone \citet{Wes2007L}. Molnar is his paper wrote: ``However, we suspect that the same conclusions are valid also when $r=2$. It would be interesting to find a proof for this case which would probably give a new approach to the case $r>2$ as well without invoking Gleason's theorem''. Such proof, for any of the five types of irreducible symmetric cones, is given in Section 3. Moreover, we do not assume continuity of respective functions. Our approach is through Peirce decomposition on symmetric cones. In Section 4 we formulate main theorems in the language of the less known Lorentz cone framework.

Case of $w^{(2)}$, perhaps a bit surprisingly, leads to a different solution. It was indirectly solved for differentiable functions in \citet{HaLaZi2008}. Here we solve this functional equation under no regularity assumptions and, at the same time, we find all real characters of triangular group.

The problem of solving \eqref{wC} for any multiplication algorithm $w$ remains a challenge.

\section{Preliminaries}
In this section we give a short introduction to the theory of symmetric cones. For further details we refer to \citet{FaKo1994}. 

A \textit{Euclidean Jordan algebra} is a Euclidean space $\En$ (endowed with scalar product denoted $\scalar{\xx,\yy}$) equipped with a bilinear mapping (product)
\begin{align*}
\En\times\En \ni \left(\xx,\yy\right)\mapsto \xx\yy\in\En
\end{align*}
and a neutral element $\ee$ in $\En$ such that for all $\xx$, $\yy$, $\zz$ in $\En$:
\begin{enumerate}[(i)]
	\item $\xx\yy=\yy\xx$, 
	\item $\xx(\xx^2\yy)=\xx^2(\xx\yy)$,
	\item $\xx\ee=\xx$,
	\item $\scalar{\xx,\yy\zz}=\scalar{\xx\yy,\zz}$.
\end{enumerate}
For $\xx\in\En$ let $\LL(\xx)\colon \En\to\En$ be linear map defined by
\begin{align*}
\LL(\xx)\yy=\xx\yy,
\end{align*}
and define 
\begin{align*}
\PP(\xx)=2\LL^2(\xx)-\LL\left(\xx^2\right).
\end{align*} 
The map $\PP\colon \En\mapsto End(\En)$ is called the \emph{quadratic representation} of $\En$.

An element $\xx$ is said to be \emph{invertible} if there exists an element $\yy$ in $\En$ such that $\LL(\xx)\yy=\ee$. Then $\yy$ is called the \emph{inverse of} $\xx$ and is denoted by $\yy=\xx^{-1}$. Note that the inverse of $\xx$ is unique. It can be shown that $\xx$ is invertible if and only if $\PP(\xx)$ is invertible and in this case $\left(\PP(\xx)\right)^{-1} =\PP\left(\xx^{-1}\right)$.

Euclidean Jordan algebra $\En$ is said to be \emph{simple} if it is not a \mbox{Cartesian} product of two Euclidean Jordan algebras of positive dimensions. Up to linear isomorphism there are only five kinds of Euclidean simple Jordan algebras. Let $\mathbb{K}$ denote either the real numbers $\RR$, the complex ones $\mathbb{C}$, quaternions $\mathbb{H}$ or the octonions $\mathbb{O}$, and write $S_r(\mathbb{K})$ for the space of $r\times r$ Hermitian matrices valued in $\mathbb{K}$, endowed with the Euclidean structure $\scalar{\xx,\yy}=\Trace(\xx\cdot\bar{\yy})$ and with the Jordan product
\begin{align}\label{defL}
\xx\yy=\tfrac{1}{2}(\xx\cdot\yy+\yy\cdot\xx),
\end{align}
where $\xx\cdot\yy$ denotes the ordinary product of matrices and $\bar{\yy}$ is the conjugate of $\yy$. Then $S_r(\RR)$, $r\geq 1$, $S_r(\mathbb{C})$, $r\geq 2$, $S_r(\mathbb{H})$, $r\geq 2$, and the exceptional $S_3(\mathbb{O})$ are the first four kinds of Euclidean simple Jordan algebras. Note that in this case 
\begin{align}\label{defP}
\PP(\yy)\xx=\yy\cdot\xx\cdot\yy.
\end{align}
The fifth kind is the Euclidean space $\RR^{n+1}$, $n\geq 2$, with Jordan product
\begin{align}\label{scL}\begin{split}
\left(x_0,x_1,\dots, x_n\right)\left(y_0,y_1,\dots,y_n\right) =\left(\sum_{i=0}^n x_i y_i,x_0y_1+y_0x_1,\dots,x_0y_n+y_0x_n\right).
\end{split}
\end{align}

To each Euclidean simple Jordan algebra one can attach the set of Jordan squares
\begin{align*}
\bar{\VV}=\left\{\xx^2\colon\xx\in\En \right\}.
\end{align*}
The interior $\VV$ is called a symmetric cone.
Moreover $\VV$ is \emph{irreducible}, i.e. it is not the Cartesian product of two convex cones. One can prove that an open convex cone is symmetric and irreducible if and only if it is the cone $\VV$ of some Euclidean simple Jordan algebra. Each simple Jordan algebra corresponds to a symmetric cone, hence there exist up to linear isomorphism also only five kinds of symmetric cones. The cone corresponding to the Euclidean Jordan algebra $\RR^{n+1}$ equipped with Jordan product \eqref{scL} is called the Lorentz cone. 

We denote by $G(\En)$ the subgroup of the linear group $GL(\En)$ of linear automorphisms which preserves $\VV$, and we denote by $G$ the connected component of $G(\En)$ containing the identity.  Recall that if $\En=S_r(\RR)$ and $GL(r,\RR)$ is the group of invertible $r\times r$ matrices, elements of $G(\En)$ are the maps $g\colon\En\to\En$ such that there exists $\ab\in GL(r,\RR)$ with
\begin{align*}
g(\xx)=\ab\cdot\xx\cdot\ab^T.
\end{align*}
We define $K=G\cap O(\En)$, where $O(\En)$ is the orthogonal group of $\En$. It can be shown that 
\begin{align*}
K=\{ k\in G\colon k\ee=\ee \}.
\end{align*} 

A \emph{multiplication algorithm} is a measurable map $\VV\to G\colon \xx\mapsto w(\xx)$ such that $w(\xx)\ee=\xx$ for all $\xx$ in $\VV$. This concept is consistent with, so called, division algorithm, which was introduced in \citet{OlRu1962} and \citet{CaLe1996}. One of two important examples of multiplication algorithm is the map $w_1(\xx)=\PP\left(\xx^{1/2}\right)$.

We will now introduce a very useful decomposition in $\En$, called \emph{spectral decomposition}. An element $\cb\in\En$ is said to be a \emph{primitive idempotent} if $\cb\cb=\cb\neq 0$ and if $\cb$ is not a sum of two non-null idempotents. A \emph{complete system of primitive orthogonal idempotents} is a set $\left(\cb_1,\dots,\cb_r\right)$ such that
\begin{align*}
\sum_{i=1}^r \cb_i=\ee\quad\mbox{and}\quad\cb_i\cb_j=\delta_{ij}\cb_i\quad\mbox{for } 1\leq i\leq j\leq r.
\end{align*}
The size $r$ of such system is a constant called the \emph{rank} of $\En$. Any element $\xx$ of a Euclidean simple Jordan algebra can be written as $\xx=\sum_{i=1}^r\lambda_i\cb_i$ for some complete $\left(\cb_1,\dots,\cb_r\right)$ system of primitive orthogonal idempotents. The real numbers $\lambda_i$, $i=1,\dots,r$ are the \emph{eigenvalues} of $\xx$. One can then define \emph{trace} and \emph{determinant} of $\xx$ by, respectively, $\tr\xx=\sum_{i=1}^r\lambda_i$ and $\det\xx=\prod_{i=1}^r\lambda_i$. An element $\xx\in\En$ belongs to $\VV$ if and only if all its eigenvalues are strictly positive. 

The rank $r$ and $\dim\VV$ of irreducible symmetric cone are connected through relation
\begin{align*}
\dim\VV=r+\frac{d r(r-1)}{2},
\end{align*}
where $d$ is an integer called the \emph{Peirce constant}. 

If $\cb$ is a primitive idempotent of $\En$, the only possible eigenvalues of $\LL(\cb)$ are $0$, $\tfrac{1}{2}$ and $1$. We denote by $\En(\cb,0)$, $\En(\cb,\tfrac{1}{2})$ and $\En(\cb,1)$ the corresponding eigenspaces. The decomposition 
\begin{align*}
\En=\En(\cb,0)\oplus\En(\cb,\tfrac{1}{2})\oplus\En(\cb,1)
\end{align*}
is called the \emph{Peirce decomposition of $\En$ with respect to $\cb$}. Note that $\PP(\cb)$ is the orthogonal projection of $\En$ onto $\En(\cb,1)$.

Fix a complete system of orthogonal idempotents $\left(\cb_i\right)_{i=1}^r$. Then for any $i,j\in\left\{1,2,\dots,r\right\}$ we write
\begin{align*}
\begin{split}
\En_{ii} & =\En(\cb_i,1)=\RR \cb_i, \\
\En_{ij} & = \En\left(\cb_i,\frac{1}{2}\right) \cap \En\left(\cb_j,\frac{1}{2}\right) \mbox{ if }i\neq j.
\end{split}
\end{align*}
It can be proved (see \cite[Theorem IV.2.1]{FaKo1994}) that
\begin{align*}
\En=\bigoplus_{i\leq j}\En_{ij}
\end{align*}
and (symbol $``\cdot"$ denotes here the Jordan product)
\begin{align*}
\begin{split}
\En_{ij}\cdot\En_{ij} & \subset\En_{ii}+\En_{ij}, \\
\En_{ij}\cdot\En_{jk} & \subset\En_{ik},\mbox{ if }i\neq k, \\
\En_{ij}\cdot\En_{kl} & =\{0\},\mbox{ if }\{i,j\}\cap\{k,l\}=\emptyset.
\end{split}\end{align*}
 Moreover (\cite[Lemma IV.2.2]{FaKo1994}), if $\xx\in\En_{ij}$, $\yy\in\En_{jk}$, $i\neq k$, then 
\begin{align}\label{xxyy}
\xx^2 & =\tfrac{1}{2}\norm{\xx}^2(\cb_i+\cb_j),\\
\notag\norm{\xx\yy}^2 & =\tfrac{1}{8}\norm{\xx}^2\norm{\yy}^2.
\end{align}
The dimension of $\En_{ij}$ is the Peirce constant $d$ for any $i\neq j$. When $\En$ is $S_r(\KK)$, if $(e_1,\dots,e_r)$ is an orthonormal basis of $\RR^r$, then $\En_{ii}=\RR e_i e_i^T$ and $\En_{ij}=\KK\left(e_i e_j^T+e_je_i^T\right)$ for $i<j$ and $d$ is equal to $dim_{|\RR}\KK$.
	
For $1\leq k \leq r$ let $P_k$ be the orthogonal projection onto $\En^{(k)}=\En(\cb_1+\ldots+\cb_k,1)$, $\det^{(k)}$ the determinant in the subalgebra $\En^{(k)}$, and, for $\xx\in\VV$, $\Delta_k(\xx)=\det^{(k)}(P_k(\xx))$. Then $\Delta_k$ is called the principal minor of order $k$ with respect to the Jordan frame $(\cb_k)_{k=1}^r$. Note that $\Delta_r(\xx)=\det\xx$. For $s=(s_1,\ldots,s_r)\in\RR^r$ and $\xx\in\VV$, we write
\begin{align*}
\Delta_s(\xx)=\Delta_1(\xx)^{s_1-s_2}\Delta_2(\xx)^{s_2-s_3}\ldots\Delta_r(\xx)^{s_r}.
\end{align*}
Note that symbol $\Delta_s$ is the same for $s$ being a number and a vector. Such notation was proposed in \citet{FaKo1994} and should not result in misleading ambiguity.
If $\xx=\sum_{i=1}^r\alpha_i\cb_i$, then $\Delta_s(\xx)=\alpha_1^{s_1}\alpha_2^{s_2}\ldots\alpha_r^{s_r}$. 

We will now introduce some basic facts about triangular group. For $\xx$ and $\yy$ in $\VV$, let $\xx\Box\yy$ denote the endomorphism of $\En$ defined by
\begin{align*}
\xx\Box\yy=\LL(\xx\yy)+\LL(\xx)\LL(\yy)-\LL(\yy)\LL(\xx).
\end{align*}
If $\cb$ is an idempotent and $\zz\in\En(\cb,\frac{1}{2})$ we define the \emph{Frobenius transformation $\tau_\cb(\zz)$ in $G$} by
\begin{align*}
\tau_\cb(\zz)=\exp(2\zz\Box\cb).
\end{align*}
Since $2\zz\Box\cb$ is nilpotent (see \cite[Lemma VI.3.1]{FaKo1994}) we get
\begin{align}\label{taucb}
\tau_{\cb}(\zz)=I+(2\zz\Box\cb)+\frac{1}{2}(2\zz\Box\cb)^2.
\end{align}
Given a Jordan frame $(\cb_i)_{i=1}^r$, the subgroup of $G$, 
\begin{align*}
\TT=\left\{\tau_{\cb_1}(\zz^{(1)})\ldots\tau_{\cb_{r-1}}(\zz^{(r-1)})\PP\left(\sum_{i=1}^r \alpha_i\cb_i\right)\colon \alpha_i>0, \zz^{(j)}\in \bigoplus_{k=j+1}^r\En_{jk}\right\}
\end{align*}
is called the \emph{triangular group corresponding to the Jordan frame $(\cb_i)_{i=1}^r$}. For any $\xx$ in $\VV$ there exists a unique $t_{\xx}$ in $\TT$ such that $\xx=t_{\xx}\ee$, that is, there exist (see \cite[Theorem IV.3.5]{FaKo1994}) elements $\zz^{(j)}\in \bigoplus_{k=j+1}^r \En_{jk}$, $1\leq j\leq r-1$ and positive numbers $\alpha_1, \ldots ,\alpha_r$ such that
\begin{align}\label{triangular}
\xx=\tau_{\cb_1}(\zz^{(1)})\tau_{\cb_2}(\zz^{(2)})\ldots\tau_{\cb_{r-1}}(\zz^{(r-1)})\left(\sum_{k=1}^r \alpha_k \cb_k \right).
\end{align}
Mapping $w_2\colon\VV\to\TT, \xx\mapsto w_2(\xx)=t_{\xx}$ is a multiplication algorithm.

For $\En=S_r(\RR)$ we have $\VV=\VV_+$. Let us define for $1\leq i,j\leq r$ matrix $\mu_{ij}=\left(\gamma_{kl}\right)_{1\leq k,l\leq r}$ such that $\gamma_{ij}=1$ and all other entries are equal $0$.
Then for Jordan frame $\left(\cb_i\right)_{i=1}^r$, where $\cb_k=\mu_{kk}$, $k=1,\ldots,r$, we have $\zz_{jk}=(\mu_{jk}+\mu_{kj})\in\En_{jk}$ and $\norm{\zz_{jk}}^2=2$, $1\leq j,k\leq r$, $j\neq k$.
If $\zz^{(i)}\in\bigoplus_{j=i+1}^r \En_{ij}$, $i=1,\ldots,r-1$, then there exists $\alpha^{(i)}=(\alpha_{i+1},\ldots,\alpha_r)\in\RR^{r-i}$ such that $\zz^{(i)}=\sum_{j=i+1}^r \alpha_j\zz_{ij}$. Then the Frobenius transformation reads
$$\tau_{\cb_i}(\zz^{(i)})\xx=\mathcal{F}_i(\alpha^{(i)})\cdot\xx\cdot \mathcal{F}_i(\alpha^{(i)})^T,$$
where $\mathcal{F}_i(\alpha^{(i)})$ is so called Frobenius matrix:
\begin{align*}
\mathcal{F}_i(\alpha^{(i)})=I+\sum_{j=i+1}^r \alpha_j \mu_{ji},
\end{align*}
ie. bellow $i$th one of identity matrix there is a vector $\alpha^{(i)}$, particularly
\begin{align*}
\mathcal{F}_2(\alpha^{(2)})=\begin{pmatrix}
  1    &   0    &   0    & \cdots & 0 \\
  0    &   1    &   0    & \cdots & 0 \\
  0    & \alpha_{3} &   1    & \cdots & 0 \\
\vdots & \vdots & \vdots & \ddots & \vdots \\
  0    & \alpha_{r} &   0    & \cdots & 1
\end{pmatrix}.
\end{align*}

It can be shown (\cite[Proposition VI.3.10]{FaKo1994}) that for each $t\in\TT$, $\xx\in\VV$ and $s\in\RR^r$,
\begin{align}\label{deltast}
\Delta_s(t\xx)=\Delta_s(t\ee)\Delta_s(\xx)
\end{align}
and for any $\zz\in\En(\cb_i,\frac{1}{2})$, $i=1,\ldots,r$,
\begin{align}\label{deltastau}
\Delta_s(\tau_{\cb_i}(\zz)\ee)=1,
\end{align}
if only $\Delta_s$ and $\TT$ are associated with the same Jordan frame $\left(\cb_i\right)_{i=1}^r$.

\section{Functional equations}
As was aforementioned, the problem \eqref{wC} is naturally generalized to symmetric cone setting. We are looking for functions $f\colon\VV\to\RR$ that satisfy following functional equation 
\begin{align}\label{wC2}
f(\xx)+f(w(\ee)\yy)=f(w(\xx)\yy),\quad (\xx,\yy)\in\VV^2.
\end{align}
Such functions we will call $w$-logarithmic Cauchy functions. By \cite[Proposition III.4.3]{FaKo1994}, for any $g$ in the group $G$
\begin{align*}
\det(g\xx)=(\DDet\,g)^{r/\dim\VV}\det\xx,
\end{align*}
where $\DDet$ denotes the determinant in the space of endomorphisms on $\VV$. Inserting a multiplication algorithm $g=g(\yy)$, $\yy\in\VV$, and $\xx=\ee$ we obtain
\begin{align*}
\DDet\left(w(\yy)\right) =(\det\yy)^{\dim\VV/r}
\end{align*}
and hence
\begin{align*}
\det(w(\yy)\xx) =\det\yy\det\xx
\end{align*}
for any $\xx,\yy\in\VV$. This means that $f(\xx)=H(\det\xx)$, where $H$ is generalized logarithmic function, ie. $H(ab)=H(a)+H(b)$ for $a,b>0$, is always a solution to \eqref{wC2}, regardless of multiplication algorithm $w$. If a $w$-logarithmic functions $f$ is additionally $K$-invariant ($f(\xx)=f(k\xx)$ for any $k\in K$), then $H(\det\xx)$ is the only possible solution (Theorem \ref{XXX}).

\begin{remark}
Consider a multiplication algorithm for which $K\ni w(\ee)\neq Id_\VV$.
Then equation \eqref{wC2} can be written as
\begin{align*}
f(\xx)+f(\yy)=f(\widetilde{w}(\xx)\yy),\quad (\xx,\yy)\in\VV^2,
\end{align*}
where $\widetilde{w}(\xx)=w(\xx)w^{-1}(\ee)$ and $\widetilde{w}(\ee)=Id_\VV$. Hence it is always enough to consider multiplication algorithms satisfying $w(\ee)=Id_\VV$.
\end{remark}
Note that for both of the considered multiplication algorithms ($w_1(\xx)=\PP(\xx^{1/2})$ and $w_2(\xx)=t_\xx\in\TT$) we have $w_i(\ee)=Id_\VV$, $i=1,2$

We start with the following crucial lemma.
\begin{lemma}\label{cij}
Let $\ab$ and $\bb$ be two distinct non-orthogonal primitive idempotents in $\En$. Then there exist a primitive idempotent $\cb$, orthogonal to $\ab$, and an element $\zz\in\En\left(\ab,1/2\right)\cap\En\left(\cb,1/2\right)$ such that
\begin{align}\label{bacz}
\bb=\lambda^2 \ab + \mu^2\cb + \lambda\mu\zz,
\end{align}
where $\lambda^2=\scalar{\ab,\bb}$, $\lambda^2+\mu^2=1$ and $\norm{\zz}^2=2$. 
\end{lemma}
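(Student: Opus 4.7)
Plan. The strategy is to Peirce-decompose $\bb$ with respect to $\ab$, extract three identities from $\bb^2=\bb$, and identify $\cb$ as a rescaled Peirce-zero component of $\bb$.

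First, write
\[
\bb=\lambda^2\ab+b_{1/2}+b_0,\qquad b_{1/2}\in\En(\ab,\tfrac12),\quad b_0\in\En(\ab,0),
\]
so that by orthogonality of the Peirce summands and $\scalar{\ab,\ab}=1$ the coefficient of $\ab$ equals $\scalar{\ab,\bb}=\lambda^2$. Expanding $\bb^2=\bb$ and projecting onto each Peirce summand (using $\ab b_{1/2}=\tfrac12 b_{1/2}$, $\ab b_0=0$, and $b_{1/2}^2=\tfrac12\norm{b_{1/2}}^2\ab+q_0$ with $q_0\in\En(\ab,0)$, the latter obtained from $\scalar{b_{1/2}^2,\ab}=\scalar{b_{1/2},b_{1/2}\ab}$) produces
\[
\norm{b_{1/2}}^2=2\lambda^2\mu^2,\qquad 2\LL(b_0)b_{1/2}=\mu^2 b_{1/2},\qquad b_0^2+q_0=b_0,
\]
with $\mu^2:=1-\lambda^2$. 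Coupled with $\tr\bb=\norm{\bb}^2=1$ and $\tr b_{1/2}=0$ (since $b_{1/2}$ is orthogonal to $\ab$ and to $\ee-\ab\in\En(\ab,0)$), this yields $\tr b_0=\mu^2$ and $\norm{b_0}^2=\mu^4$. Non-orthogonality forces $\lambda^2>0$; if $\lambda^2=1$ then $b_{1/2}=b_0=0$ and $\bb=\ab$, contradicting distinctness. Hence $\lambda,\mu\in(0,1)$ and I define $\zz:=(\lambda\mu)^{-1}b_{1/2}\in\En(\ab,\tfrac12)$ with $\norm{\zz}^2=2$.

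The heart of the proof is identifying $b_0$ as $\mu^2\cb$ for a primitive idempotent $\cb\in\En(\ab,0)$. Since $\bb$ is a primitive idempotent, $\bb\in\overline{\VV}$; as $\PP(\ee-\ab)$ is the orthogonal projection onto $\En(\ee-\ab,1)=\En(\ab,0)$ and maps $\overline{\VV}$ into the closed positive cone $\overline{\VV}(\ab,0)$ of the Jordan subalgebra $\En(\ab,0)$, one has $b_0=\PP(\ee-\ab)\bb\in\overline{\VV}(\ab,0)$. Spectrally decomposing $b_0=\sum_i\beta_i\cb'_i$ in this subalgebra yields $\beta_i\ge0$, and the identities $\sum\beta_i=\mu^2$ and $\sum\beta_i^2=\mu^4=(\sum\beta_i)^2$ force $\sum_{i<j}\beta_i\beta_j=0$; combined with non-negativity, this leaves exactly one $\beta_i$ non-zero, equal to $\mu^2$. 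Let $\cb$ denote the corresponding $\cb'_i$: it is primitive in $\En$ (as part of a Jordan frame extending $\ab$), lies in $\En(\ab,0)$, so is orthogonal to $\ab$, and $b_0=\mu^2\cb$.

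Finally, substituting $b_0=\mu^2\cb$ and $b_{1/2}=\lambda\mu\zz$ into the Peirce-$\tfrac12$ identity gives $\LL(\cb)\zz=\tfrac12\zz$, so $\zz\in\En(\cb,\tfrac12)$, and the representation $\bb=\lambda^2\ab+\mu^2\cb+\lambda\mu\zz$ with the stated properties follows. I expect the main obstacle to be the positivity/cone-preservation argument that reduces $b_0$ to a positive scalar multiple of a primitive idempotent; once this is in place, everything else is direct Peirce bookkeeping.
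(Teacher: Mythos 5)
Your proof is correct, but it follows a genuinely different route from the paper's. The paper invokes the fact (Faraut--Kor\'anyi, Proposition IV.1.6) that the subalgebra generated by the two non-orthogonal idempotents $\ab,\bb$ is isomorphic to $S_2(\RR)$, reads off $\cb$ explicitly as $\cb=(\bb+\lambda^2\ab-2\ub)/\mu^2$ with $\ub=\ab\bb$, and then verifies the Peirce eigenrelations and $\norm{\zz}^2=2$ by direct computation with $\LL$ and by squaring \eqref{bacz}. You instead Peirce-decompose $\bb$ relative to $\ab$, extract the three component identities from $\bb^2=\bb$, and pin down the $\En(\ab,0)$-part by a trace/norm/positivity argument: $\tr b_0=\mu^2$, $\norm{b_0}^2=\mu^4$ and $b_0=\PP(\ee-\ab)\bb\in\bar\VV$ force $b_0=\mu^2\cb$ with $\cb$ primitive. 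Your approach avoids the $S_2(\RR)$ classification of the two-generated subalgebra at the cost of using two other standard facts — that $\PP(\ee-\ab)$ maps $\bar\VV$ into the closed cone of the subalgebra $\En(\ab,0)$, and that primitive idempotents of $\En(\ab,0)$ are primitive in $\En$ — both of which are legitimate and available in Faraut--Kor\'anyi. By uniqueness of the Peirce decomposition your $\cb$ and $\zz$ coincide with the paper's; your version arguably makes the mechanism ($\bb$ being a rank-one element ``tilted'' between $\ab$ and a single orthogonal primitive idempotent) more transparent, while the paper's version is more explicit and self-contained in its computations. One small point worth making explicit in your write-up: the normalization $\scalar{\ab,\ab}=1$ (equivalently, that $\scalar{\cdot,\cdot}$ is the trace form) is used both in identifying the $\En(\ab,1)$-coefficient as $\scalar{\ab,\bb}$ and in the identity $\norm{\bb}^2=\tr\bb=1$; this matches the paper's implicit convention, so it is not a gap, but it should be stated.
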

\begin{proof}
Recall that subalgebra $\En(\ab,\bb)$ of $\En$ generated by non-orthogonal idempotents $\ab$ and $\bb$ is isomorphic to $S_2(\RR)$ with algebra isomorphism $\rho$ defined by (see \cite[Proposition IV.1.6]{FaKo1994}) 
\begin{align*}
\rho(\alpha a_0+\beta b_0 + \gamma u_0)=\alpha \ab+\beta \bb+\gamma \ub,\quad\mbox{ where } \ub=\ab\bb,
\end{align*}
with $a_0=\begin{pmatrix} 1 & 0 \\ 0 & 0 \end{pmatrix}$, $b_0=\begin{pmatrix} \lambda^2 & \lambda\mu \\ \lambda\mu & \mu^2 \end{pmatrix}$ and $u_0=(a_0\cdot b_0+b_0\cdot a_0)/2$, with $\lambda^2=\scalar{\ab,\bb}$ and $\lambda^2+\mu^2=1$. The value of $\scalar{\ab,\bb}$ is strictly positive due to the positive definiteness of operator $\LL(\bb)$ and
\begin{align*}
\scalar{\ab,\bb}=\scalar{\ab^2,\bb}=\scalar{\ab,\LL(\bb)\ab}.
\end{align*}

Note that
\begin{align*}
c_0:=\frac{b_0+\lambda^2 a_0-2u_0}{\mu^2}=\begin{pmatrix}0 & 0 \\ 0 & 1 \end{pmatrix}.
\end{align*}
Therefore, there exists a primitive idempotent $\cb$, orthogonal to $\ab$, such that
\begin{align}\label{mult}
\cb:=\rho(c_0)=\frac{\bb+\lambda^2 \ab-2\ub}{\mu^2}.
\end{align}
Since $\PP(\dd)$ is the projection onto $\En(\dd,1)=\RR\dd$ for a arbitrary idempotent $\dd$, we have $\PP(\ab)\bb=\lambda^2\ab$ and $\PP(\bb)\ab=\lambda^2\bb$, which can be rewritten as (recall that $\PP(\xx)=2\LL^2(\xx)-\LL\left(\xx^2\right)$)
\begin{align*}
\ab\ub=\frac{\ub+\lambda^2\ab}{2},\qquad\bb\ub=\frac{\ub+\lambda^2\bb}{2}.
\end{align*}
Multiplying \eqref{mult} by $\LL(\bb)$ and $\LL(\cb)$, respectively, we obtain
\begin{align*}
\bb\cb  =\frac{\bb+\lambda^2\ub-2\bb\ub}{\mu^2},\qquad \cb =\frac{\bb\cb-2\cb\ub}{\mu^2},
\end{align*}
from which we deduce that
$$\bb\cb=\frac{\bb+\lambda^2\ub-(\ub+\lambda^2\bb)}{\mu^2}=\bb-\ub$$ and
$$\cb\ub=\frac{\bb\cb-\mu^2\cb}{2}=\frac{\bb-\ub-\mu^2\cb}{2}=\frac{\ub-\lambda^2\ab}{2},$$
where in the latter equality \eqref{mult} was used. Hence, for $\zz=\frac{2}{\lambda\mu}(\ub-\lambda^2\ab)$ we obtain
\begin{align*}
\ab\zz =\frac{2}{\lambda\mu}(\ab\ub-\lambda^2\ab)=\tfrac{1}{2}\zz,\qquad
\cb\zz =\frac{2}{\lambda\mu}\cb\ub=\tfrac{1}{2}\zz.
\end{align*}
Therefore $\zz\in\En\left(\ab,1/2\right)\cap\En\left(\cb,1/2\right)$. It remains to show that equality \eqref{bacz} holds and $\norm{\zz}^2=2$. 

Inserting $\ub=\frac{\lambda\mu}{2}\zz+\lambda^2\ab$ to \eqref{mult} we obtain \eqref{bacz}. By \eqref{xxyy} we have $\zz^2=\frac{\norm{\zz}^2}{2}(\ab+\cb)$. Square both side of equation \eqref{bacz} to we arrive at
\begin{align*}
\bb^2 & =\lambda^4\ab^2+\mu^4\cb^2+\lambda^2\mu^2\frac{\norm{\zz}^2}{2}(\ab+\cb)+2\lambda^2\mu^2\ab\cb+2\lambda^3\mu\ab\zz+2\lambda\mu^3\bb\zz \\
 & = \lambda^2\left(\lambda^2+\mu^2\frac{\norm{\zz}^2}{2}\right)\ab+\mu^2\left(\lambda^2\frac{\norm{\zz}^2}{2}+\mu^2\right)\cb+\lambda\mu(\lambda^2+\mu^2)\zz.
\end{align*}
But $\bb=\bb^2$, so $\norm{\zz}^2=2$.
\end{proof}

The following Lemma is a technical result, which will be extensively used in the proofs of theorems.
\begin{lemma}\label{lema2}
Let $\ab$ and $\bb$ be primitive orthogonal idempotents and let $\zz\in\En\left(\ab,1/2\right)\cap\En\left(\bb,1/2\right)$.
\begin{enumerate}[(i)]
\item Then
\[ \tau_\ab(\zz)\ee=\ee+\zz+\tfrac{\norm{\zz}^2}{2}\bb\mbox{ and }\tau_\ab(\zz)\ab=\ab+\zz+\tfrac{\norm{\zz}^2}{2}\bb.\]
\item Suppose additionally that $\norm{\zz}^2=2$. Then
\begin{align*}
\PP(\alpha \ab+\beta\bb+\gamma\zz)\ab & = \alpha^2\ab+\gamma^2\bb+\alpha\gamma\zz, \\
\PP(\alpha \ab+\beta\bb+\gamma\zz)\zz & = 2 \alpha\gamma\ab+2\beta \gamma\bb+(\alpha\beta+\gamma^2)\zz,
\end{align*}
for $\alpha, \beta, \gamma\in\RR$.
\item If $\xx=\sum_{i=1}^r\alpha_i\cb_i$ for $\alpha_i>0$, and $\yy=\sum_{i=1}^r\beta_i\cb_i$ for a Jordan frame $(\cb_i)_{i=1}^r$, then
\begin{align*}
\PP(\xx^{1/2})\yy=\LL(\xx)\yy.
\end{align*}
\end{enumerate}
\end{lemma}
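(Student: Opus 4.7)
The plan is that all three parts reduce to direct computations once one has the small calculus $\ab\bb=0$, $\ab\zz=\bb\zz=\tfrac12\zz$, and $\zz^2=\tfrac{\norm{\zz}^2}{2}(\ab+\bb)$ (from \eqref{xxyy}) in hand. Apart from these and the polynomial expansions $\PP(\xx)=2\LL^2(\xx)-\LL(\xx^2)$ and \eqref{taucb}, no further ingredients are needed.

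For (i), I would expand $\tau_\ab(\zz)$ via \eqref{taucb} and the definition
\[\zz\Box\ab=\LL(\zz\ab)+\LL(\zz)\LL(\ab)-\LL(\ab)\LL(\zz).\]
A one-line check, using $\zz\ab=\tfrac12\zz$ and $\LL(\ee)=I$, shows that $(2\zz\Box\ab)\ee=\zz$ and $(2\zz\Box\ab)\ab=\zz$. Iterating, $(2\zz\Box\ab)\zz=2\zz^2-2\ab\zz^2$; substituting $\zz^2=\tfrac{\norm{\zz}^2}{2}(\ab+\bb)$ and $\ab\bb=0$ collapses this to $\norm{\zz}^2\bb$. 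Feeding the three results into \eqref{taucb} applied to $\ee$ and to $\ab$ yields both displayed identities.

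For (ii), set $\xx=\alpha\ab+\beta\bb+\gamma\zz$ with $\norm{\zz}^2=2$, so $\zz^2=\ab+\bb$. A direct expansion using the same product rules gives
\[\xx^2=(\alpha^2+\gamma^2)\ab+(\beta^2+\gamma^2)\bb+(\alpha+\beta)\gamma\zz,\]
together with short closed forms for $\LL(\xx)\ab$ and $\LL(\xx)\zz$, and hence for $\LL^2(\xx)\ab$, $\LL^2(\xx)\zz$, $\LL(\xx^2)\ab$ and $\LL(\xx^2)\zz$. Substituting into $\PP(\xx)=2\LL^2(\xx)-\LL(\xx^2)$ and collecting the coefficients along $\ab,\bb,\zz$ produces the two identities. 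This is the only nontrivial bookkeeping in the lemma and the one place I expect to spend real effort, though there is no conceptual obstacle.

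Part (iii) is immediate: because the $\cb_i$ form a Jordan frame, $\xx^{1/2}=\sum_i\alpha_i^{1/2}\cb_i$, whence $\LL(\xx^{1/2})\yy=\sum_i\alpha_i^{1/2}\beta_i\cb_i$ and iterating once more gives $\LL^2(\xx^{1/2})\yy=\sum_i\alpha_i\beta_i\cb_i=\LL(\xx)\yy$. Since $(\xx^{1/2})^2=\xx$, we conclude $\PP(\xx^{1/2})\yy=2\LL^2(\xx^{1/2})\yy-\LL(\xx)\yy=\LL(\xx)\yy$.
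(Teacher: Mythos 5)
Your proposal is correct and follows essentially the same route as the paper: expanding $\tau_\ab(\zz)$ via \eqref{taucb} and evaluating $(2\zz\Box\ab)$ on $\ee$, $\ab$, $\zz$ using $\ab\zz=\bb\zz=\tfrac12\zz$ and $\zz^2=\tfrac{\norm{\zz}^2}{2}(\ab+\bb)$ for (i), the direct $\PP(\xx)=2\LL^2(\xx)-\LL(\xx^2)$ bookkeeping for (ii) (your formula for $\xx^2$ matches the paper's intermediate step), and the observation $\LL^2(\xx^{1/2})\yy=\LL(\xx)\yy$ on a common Jordan frame for (iii). All intermediate identities you state check out against the paper's computation.
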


\begin{proof}
We start with $(i)$. By \eqref{taucb} the Frobenius transformation $\tau_{\ab}(\zz)$ is given by 
\begin{align*}
\tau_{\ab}(\zz)=I+2\zz\Box\ab+2(\zz\Box\ab)^2,
\end{align*}
where $\zz\Box\ab =\tfrac{1}{2}\LL(\zz)+\LL(\zz)\LL(\ab)-\LL(\ab)\LL(\zz)$.
It is easy to see that
\begin{align*}
I_1 & =(\zz\Box\ab)\ee =\tfrac{1}{2}\zz+\LL(\zz)\ab-\LL(\ab)\zz=\tfrac{1}{2}\zz,\\
I_2 & = (\zz\Box\ab)\ab =\tfrac{1}{4}\zz+\LL(\zz)\ab-\tfrac{1}{2}\LL(\ab)\zz=\tfrac{1}{2}\zz,\\
I_3 & =(\zz\Box\ab)\zz =\tfrac{1}{2}\zz^2+\tfrac{1}{2}\zz^2-\LL(\ab)\zz^2=\LL(\ee-\ab)\zz^2.
\end{align*}
By \eqref{xxyy} we obtain that $\zz^2=\tfrac{\norm{\zz}^2}{2}(\ab+\bb)$ and 
\begin{align*}
I_3 = \tfrac{\norm{\zz}^2}{2}\bb.
\end{align*}
Finally
\begin{align*}
\tau_{\ab}(\zz)\ee & =\ee+2I_1+2(\zz\Box\ab)\tfrac{1}{2}\zz=\ee+2I_1+I_3, \\
\tau_{\ab}(\zz)\ab & =\ab+2I_2+2(\zz\Box\ab)\tfrac{1}{2}\zz=\ab+2I_2+I_3.
\end{align*}

Now we prove $(ii)$. Denote $\alpha \ab+\beta\bb+\gamma\zz$ by $\xx$ and recall that $\PP(\xx)=2\LL^2(\xx)-\LL(\xx^2)$. By the definition of $\En\left(\ab,1/2\right)\cap\En\left(\bb,1/2\right)$ it follows that $\LL(\ab)\zz=\LL(\bb)\zz=\tfrac{1}{2}\zz$ and
\begin{align*}
\LL(\xx)\ab =\alpha\ab+\gamma\zz/2.
\end{align*}
Moreover, by \eqref{xxyy} we have $\zz^2=\ab+\bb$ and
\begin{align*}
J_1 &=\LL^2(\xx)\ab  =\LL(\xx)(\alpha\ab+\tfrac{1}{2}\gamma\zz)  = \alpha(\alpha\ab+\tfrac{1}{2}\gamma\zz)+\tfrac{1}{2}\gamma \LL(\xx)\zz = \\
 & =\alpha^2\ab+\tfrac{1}{2}\alpha\gamma\zz+\tfrac{1}{2}\gamma (\tfrac{1}{2}\alpha \zz+\tfrac{1}{2}\beta\zz+\gamma(\ab+\bb)) = 
(\alpha^2+\tfrac{1}{2}\gamma^2)\ab+\tfrac{1}{2}\gamma^2\bb+\tfrac{1}{4}(3\alpha+\beta)\gamma\zz.
\end{align*}
Proceeding similarly we obtain
\begin{align*}
\xx^2 & = (\alpha^2+\gamma^2)\ab+(\beta^2+\gamma^2)\bb+\gamma(\alpha+\beta)\zz,\\
J_2 &=\LL\left(\xx^2\right)\ab =(\alpha^2+\gamma^2)\ab+\tfrac{1}{2}\gamma(\alpha+\beta)\zz,
\end{align*}
and finally
\begin{align*}
\PP(\xx)\ab & = 2J_1-J_2=\alpha^2\ab+\gamma^2\bb+\alpha\gamma\zz.
\end{align*}
The second part of $(ii)$ is proved analogously.

Let us note that if $\xx=\sum_{i=1}^r \alpha_i\cb_i$, then $\xx^{1/2}=\sum_{i=1}^r \sqrt{\alpha_i}\cb_i$ and
\begin{align*}
\LL\left( \sum_{i=1}^r \alpha_i\cb_i\right) \sum_{i=1}^r \beta_i\cb_i = \sum_{i=1}^r\alpha_i\beta_i\cb_i.
\end{align*}
Thus for $\yy=\sum_{i=1}^r \beta_i\cb_i$ we have $\LL^2(\xx^{1/2})\yy=\LL(\xx)\yy$, which is equivalent to the condition $\PP(\xx^{1/2})\yy=\LL(\xx)\yy$.
\end{proof}

\begin{theorem}[$w_1$-logarithmic Cauchy functional equation]\label{detwth}
Let $f\colon \VV\to\RR$ be a function such that
\begin{align}\label{detP}
f(\xx)+f(\yy)=f\left(\PP\left(\xx^{1/2}\right)\yy\right),\quad (\xx,\yy)\in\VV^2.
\end{align}
Then there exists a logarithmic function $H$ such that
\begin{align*}
f(\xx)=H(\det\xx)
\end{align*}
for all $\xx\in\VV$.
\end{theorem}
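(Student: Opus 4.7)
The plan is to proceed in four stages. First, derive elementary consequences of the functional equation: with $\xx = \ee$ and $\PP(\ee^{1/2})\yy = \yy$ one obtains $f(\ee) = 0$; with $\yy = \xx^{-1}$ (noting that $\PP(\xx^{1/2})\xx^{-1} = \ee$ in the Jordan algebra) one obtains $f(\xx^{-1}) = -f(\xx)$; and with $\xx = \alpha\ee$, so that $\PP((\alpha\ee)^{1/2})$ acts as multiplication by $\alpha$, one obtains the scaling identity $f(\alpha\yy) = f(\alpha\ee) + f(\yy)$. In particular $\alpha \mapsto f(\alpha\ee)$ is logarithmic on $(0,\infty)$.

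Second, fix a Jordan frame $(\cb_i)_{i=1}^r$ and restrict the functional equation to $\xx, \yy$ diagonal in this frame, $\xx = \sum_i \alpha_i \cb_i$, $\yy = \sum_i \beta_i \cb_i$. By Lemma~\ref{lema2}(iii), $\PP(\xx^{1/2})\yy = \LL(\xx)\yy = \sum_i \alpha_i \beta_i \cb_i$, and the restricted equation reduces to the multiplicative Cauchy equation on $(0,\infty)^r$, whose general solution (under no regularity) is $f(\sum_i \alpha_i \cb_i) = \sum_{i=1}^r H_i(\alpha_i)$ with each $H_i$ logarithmic. Extending by taking $\yy = \ee + t\zz$ for $\zz \in \En_{ij}$ with $\|\zz\|^2 = 2$, Lemma~\ref{lema2}(ii) together with $\PP(\xx^{1/2})\ee = \xx$ and $\PP(\xx^{1/2})\zz = \sqrt{\alpha_i \alpha_j}\zz$ (the latter obtained by applying Lemma~\ref{lema2}(ii) within the rank-2 Peirce block) gives $\PP(\xx^{1/2})(\ee + t\zz) = \xx + t\sqrt{\alpha_i \alpha_j}\zz$, promoting the formula to
\begin{align*}
f\Bigl(\sum_k \alpha_k \cb_k + s\zz\Bigr) = \sum_k H_k(\alpha_k) + \tilde{g}_\zz\Bigl(\frac{s}{\sqrt{\alpha_i \alpha_j}}\Bigr),
\end{align*}
where $\tilde{g}_\zz(s) := f(\ee + s\zz)$.

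Third, and most delicate, show $H_1 = \cdots = H_r$. With $\xx = \ee + s\zz$ non-diagonal, Lemmas~\ref{cij}--\ref{lema2} and the spectral decomposition give $\xx^{1/2} = a(\cb_i + \cb_j) + b\zz + \sum_{k \notin \{i,j\}}\cb_k$ with $a^2 + b^2 = 1$, $2ab = s$, and a direct Peirce-block computation yields $\PP(\xx^{1/2})(\ee + t\zz) = (1 + st)(\cb_i + \cb_j) + \sum_{k \notin \{i,j\}}\cb_k + (s + t)\zz$. Substituting into the formula from the previous step produces the hyperbolic-tangent-type identity
\begin{align*}
\tilde{g}_\zz(s) + \tilde{g}_\zz(t) = (H_i + H_j)(1 + st) + \tilde{g}_\zz\Bigl(\frac{s + t}{1 + st}\Bigr),
\end{align*}
and the substitution $s = \tanh a$, $t = \tanh b$ shows that $a \mapsto \tilde{g}_\zz(\tanh a) + (H_i + H_j)(\cosh a)$ is additive on $\RR$, thereby determining $\tilde{g}_\zz$ in terms of $H_i + H_j$ up to a free additive summand. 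To separate the individual $H_i$'s I would then apply the equation with mixed Peirce inputs: for $r \geq 3$, take $\yy = \ee + t\zz'$ with $\zz' \in \En_{ik}$, $k \notin \{i,j\}$, exploiting the Peirce product rule $\En_{ij} \cdot \En_{ik} \subset \En_{jk}$ to produce a new nontrivial $\En_{jk}$-component in $\PP(\xx^{1/2})\yy$ whose consistency forces $H_i = H_j = H_k$; for $r = 2$ with $d \geq 2$, take $\zz' \in \En_{ij}$ orthogonal to $\zz$, so that \eqref{xxyy} yields $\zz\zz' = 0$ and hence $\PP(\xx^{1/2})\zz' = \sqrt{1 - s^2}\zz'$, an independent relation that pins down the free summand (with the remaining $r = 2$, $d = 1$ case recovered from the existing Lorentz cone result). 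This is precisely the main obstacle: without any regularity assumption, one must squeeze enough algebraic identities out of the Peirce block structure to kill every remaining degree of freedom.

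Finally, with $H_1 = \cdots = H_r =: H$ the diagonal formula reads $f(\sum_i \alpha_i \cb_i) = \sum_i H(\alpha_i) = H(\prod_i \alpha_i) = H(\det \xx)$. Every $\xx \in \VV$ is diagonal in some Jordan frame by the spectral decomposition, and the common $H$ is frame-independent because $H(\alpha) = \tfrac{1}{r}f(\alpha\ee)$; thus $f(\xx) = H(\det \xx)$ on all of $\VV$.
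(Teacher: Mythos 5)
Your first two stages are correct and coincide with the opening of the paper's own proof: restricting to a fixed Jordan frame and using Lemma \ref{lema2}(iii) gives $f\left(\sum_i\alpha_i\cb_i\right)=\sum_i H_i(\alpha_i)$ with each $H_i$ logarithmic, and your promotion to elements $\sum_k\alpha_k\cb_k+s\zz$ with $\zz\in\En_{ij}$, together with the identity $\tilde g_\zz(s)+\tilde g_\zz(t)=(H_i+H_j)(1+st)+\tilde g_\zz\bigl(\tfrac{s+t}{1+st}\bigr)$, are verifiable computations. The gap is in stage three, which is exactly the crux of the theorem. Every relation you derive inside a single Peirce block $\RR\cb_i\oplus\RR\cb_j\oplus\En_{ij}$ is invariant under interchanging $\cb_i$ and $\cb_j$, so it can only ever constrain the symmetric sum $H_i+H_j$; your own conclusion --- that $a\mapsto\tilde g_\zz(\tanh a)+(H_i+H_j)(\cosh a)$ is additive --- confirms this, since it leaves both the difference $H_i-H_j$ and an arbitrary additive summand $G$ completely free ($G$ is not killed by $\tilde g_\zz(s)+\tilde g_\zz(-s)=(H_i+H_j)(1-s^2)$ either, because additive functions are odd). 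The devices you propose to break the symmetry are not worked out, and it is doubtful they can succeed as stated: for $r\ge3$ you would first need a closed formula for $f$ on elements with components in several off-diagonal blocks before any ``consistency'' of an $\En_{jk}$-component could be extracted, and no such formula has been established; for $r=2$, $d\ge2$, the relation $\PP(\xx^{1/2})\zz'=\sqrt{1-s^2}\,\zz'$ only produces values of $f$ on a two-parameter family $\ee+s\zz+u\zz'$ that is again symmetric in $i\leftrightarrow j$; and for $r=2$, $d=1$ you fall back on the known Lorentz-cone result, which defeats the purpose, since $r=2$ is precisely the case the theorem is meant to settle from scratch.

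The paper breaks the $i\leftrightarrow j$ symmetry by a different mechanism: it indexes the logarithmic functions by primitive idempotents, $H_\cb(\lambda)=f(\lambda\cb+\cbo)$, and proves $H_\ab\equiv H_\bb$ for an arbitrary pair of distinct non-orthogonal primitive idempotents by writing $\bb=\lambda^2\ab+\mu^2\cb+\lambda\mu\zz$ (Lemma \ref{cij}) and then exhibiting an explicit pair $\xx,\yy\in\VV$ with $\PP(\xx)\yy^2=\alpha\ab+\ab^\bot$ and $\PP(\yy)\xx^2=\alpha\bb+\bb^\bot$ for all $\alpha$ in a nonvoid open interval; the two-sided symmetry $f(\PP(\xx)\yy^2)=f(\xx^2)+f(\yy^2)=f(\PP(\yy)\xx^2)$ then forces $H_\ab=H_\bb$ on that interval, hence everywhere, and the orthogonal case follows via a third idempotent non-orthogonal to both. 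Some device of this kind --- comparing $f$ along two genuinely different Jordan frames rather than within one fixed block --- is what your argument is missing; until it is supplied, the proof is incomplete.
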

\begin{proof}
Put $\xx=\alpha \cb+\cbo$ and $\yy=\beta\cb+\cbo$ for $\alpha, \beta>0$ for an idempotent $\cb$, where $\cbo=\ee-\cb$. Then, by $(iii)$ of Lemma \ref{lema2}, we have
\begin{align*}
f(\alpha\cb+\cbo)+f(\beta\cb+\cbo)=f(\alpha\beta\cb+\cbo).
\end{align*}
Function $(0,\infty)\ni\lambda\mapsto f(\lambda\cb+\cbo)\in\RR$ is logarithmic, hence, we may write 
\begin{align*}
f(\lambda\cb+\cbo)=H_\cb(\lambda),\quad \lambda>0,
\end{align*}
where $H_\cb$ is logarithmic. 

Let $\xx=\sum_{i=1}^r \alpha_i\cb_i$, $\alpha_i>0$ for $i=1, \ldots,r$, where $(\cb_i)_{i=1}^r$ is a Jordan frame. Note that $\xx=\prod_{i=1}^r\left(\alpha_i\cb_i+\cbo_i\right)$. Hence, by \eqref{detP} and Lemma \ref{lema2} $(iii)$ we get inductively
\begin{align}\label{start}\begin{split}
f(\xx) & =f\left(\prod_{i=1}^r\left(\alpha_i\cb_i+\cbo_i\right)\right)=f\left(\PP(\alpha_r^{1/2}\cb_r+\cb_r^\bot)\prod_{i=1}^{r-1}\left(\alpha_i\cb_i+\cbo_i\right)\right)\\
 & =f\left(\prod_{i=1}^{r-1}\left(\alpha_i\cb_i+\cbo_i\right)\right)+f(\alpha_r\cb_r+\cb_r^\bot)=\sum_{i=1}^r H_{\cb_i}(\alpha_i).
\end{split}
\end{align}
Our aim is to show that $H_\ab\equiv H_\bb$ for any primitive idempotents $\ab$ and $\bb$. Then $H_\ab\equiv H_\bb\equiv H$ and by \eqref{start} we obtain
$$f(\xx)=\sum_{i=1}^r H(\alpha_i)=H\left(\prod_{i=1}^r\alpha_i\right)=H(\det\xx).$$

Consider any distinct non-orthogonal primitive idempotents $\ab$ and $\bb$. By Lemma \ref{cij} there exists a primitive idempotent $\cb$, orthogonal to $\ab$, and $\zz\in\En\left(\ab,1/2\right)\cap\En\left(\cb,1/2\right)$ such that
\begin{align*}
\bb=\lambda^2 \ab + \mu^2\cb + \lambda\mu\zz,
\end{align*}
where $\lambda^2+\mu^2=1$ and $\norm{\zz}^2=2$.

Without loss of generality we may assume $\lambda>0$.

By \eqref{detP} we have
\begin{align}\label{deteq}
f\left(\PP(\xx)\yy^2\right)=f\left(\xx^2\right)+f(\yy^2)=f\left(\PP(\yy)\xx^2\right)
\end{align}
for any $\xx,\yy\in\VV$. We will choose $\xx$ and $\yy$ such that $\PP(\xx)\yy^2=\alpha\ab+\ab^\bot$ and $\PP(\yy)\xx^2=\alpha\bb+\bb^\bot$ for $\alpha$ in a nonvoid open interval. 

Let
\begin{align*}
\xx & =x_1\ab+x_2\cb+x_3\zz+(\ab+\cb)^\bot,\\
\yy & =y_1\ab+y_2\cb+y_3\zz+(\ab+\cb)^\bot.
\end{align*}
Then
\begin{align*}
\yy^2 & =(y_1^2+y_3^2)\ab+(y_2^2+y_3^2)\cb+y_3(y_1+y_2)\zz+(\ab+\cb)^\bot.
\end{align*}
Using Lemma \ref{lema2} $(ii)$, we arrive at
\begin{align*}
\PP(\xx)\ab & =x_1^2\ab+x_3^2\cb+x_1x_3\zz,\\
\PP(\xx)\cb & =x_3^2\ab+x_2^2\cb+x_2x_3\zz,\\
\PP(\xx)\zz & =2x_1x_3\ab+2x_2x_3\cb+(x_1x_2+x_3^2)\zz,\\
\PP(\xx)(\ab+\cb)^\bot & =(\ab+\cb)^\bot.
\end{align*}
Thus
\begin{align*}
\PP(\xx)\yy^2= & (y_1^2+y_3^2)\left(x_1^2\ab+x_3^2\cb+x_1x_3\zz\right) +(y_2^2+y_3^2)\left(x_3^2\ab+x_2^2\cb+x_2x_3\zz\right) \\ & +
 y_3(y_1+y_2)\left(2x_1x_3\ab+2x_2x_3\cb+(x_1x_2+x_3^2)\zz\right)
 +(\ab+\cb)^\bot = \\
 & =\left((y_1^2 + y_3^2) x_1^2 + (y_2^2 + y_3^2) x_3^2 + 2 y_3 (y_1 + y_2) x_1 x_3\right)\ab \\
 & +\left((y_1^2 + y_3^2) x_3^2 + (y_2^2 + y_3^2) x_2^2 + 2 y_3 (y_1 + y_2) x_2 x_3 \right)\cb \\
 & +\left((y_1^2 + y_3^2) x_1x_3 + (y_2^2 + y_3^2) x_2 x_3 + y_3 (y_1 + y_2) (x_1x_2 + x_3^2) \right)\zz +(\ab+\cb)^\bot.
\end{align*}
By symmetry
\begin{align*}
\PP(\yy)\xx^2 & =\left((x_1^2 + x_3^2) y_1^2 + (x_2^2 + x_3^2) y_3^2 + 2 x_3 (x_1 + x_2) y_1 y_3\right)\ab \\
 & +\left((x_1^2 + x_3^2) y_3^2 + (x_2^2 + x_3^2) y_2^2 + 2 x_3 (x_1 + x_2) y_2 y_3 \right)\cb \\
 & +\left((x_1^2 + x_3^2) y_1y_3 + (x_2^2 + x_3^2) y_2 y_3 + x_3 (x_1 + x_2) (y_1y_2 + y_3^2) \right)\zz +(\ab+\cb)^\bot.
\end{align*}
Let
\begin{align*}
\begin{array}{lll}
x_1 = C \frac{\sqrt{\alpha} \mu^2 \left(\lambda^2+\sqrt{\alpha} \left(1+\lambda^2\right) \mu^2\right)}{1+\lambda^2 \mu^2},
& x_2 = C \left(\frac{1-\sqrt{\alpha}}{1+\lambda^2 \mu^2}-\mu^2\right),
& x_3 =C \sqrt{\alpha} \lambda \mu, \\
y_1 =\lambda^{-2}\mu^{-2},
& y_2 =1,
& y_3 =-\frac{\sqrt{\alpha}+\lambda^2 \mu^2}{\left(1-\sqrt{\alpha}\right) \lambda^3 \mu},
\end{array}
\end{align*}
where $C=\frac{\left(1-\sqrt{\alpha}\right) \lambda^3 \left(1+\lambda^2 \mu^2\right)}{\alpha (1-\lambda^4)+2 \sqrt{\alpha} \lambda^2-\lambda^4\left(1-\mu^4\right)}$. Elements $\xx$ and $\yy$ belong to the cone $\VV$ if $x_1x_2>x_3^2$ and $y_1y_2>y_3^2$ (see \cite[Exercise 7b)]{FaKo1994}). This conditions are satisfied for $\alpha\in\left(0,\frac{\lambda^8}{(1+\lambda^2)^2} \right)$. Thus for $\alpha\in\left(0,\frac{\lambda^8}{(1+\lambda^2)^2} \right)$ we have $\xx,\yy\in\VV$ and
\begin{align*}
\PP(\xx)\yy^2&=\alpha\ab+\cb+(\ab+\cb)^\bot=\alpha\ab+\ab^\bot, \\
\PP(\yy)\xx^2&=(\alpha\lambda^2+\mu^2)\ab+(\alpha\mu^2+\lambda^2)\cb+\lambda\mu(\alpha-1)\zz+(\ab+\cb)^\bot=\alpha\bb+\bb^\bot. 
\end{align*}
Note that there exists infinitely many such pairs $(\xx,\yy)$ - the one presented above is the unique one with $y_1 =\lambda^{-2}\mu^{-2}$ and $y_2=1$.
Inserting this result into \eqref{deteq}, we arrive at
\begin{align*}
H_\ab(\alpha)=H_{\bb}(\alpha)
\end{align*}
for any $\alpha$ in nonvoid open interval. This implies $H_\ab\equiv H_\bb$ for any non-orthogonal primitive idempotents $\ab$ and $\bb$.

If $\ab$ and $\bb$ are orthogonal primitive idempotents, then there exists a primitive idempotent $\cb$, which is non-orthogonal to $\ab$ and $\bb$, so
\begin{align*}
H_\ab\equiv H_\cb\equiv H_\bb.
\end{align*}
Hence, $H_\ab\equiv H_\bb\equiv H$ for arbitrary primitive idempotents $\ab$ and $\bb$, what completes the proof. 
\end{proof}

We will now prove the result for multiplication algorithm connected with triangular group $\TT$. It is closely related to the problem of finding all real characters of $\TT$ - see Remark \ref{character} after the proof.

\begin{theorem}[$w_2$-logarithmic Cauchy functional equation]\label{w2th}
Let $f\colon \VV\to \RR$ be a function satisfying
\begin{align}\label{mainriesz}
f(\xx)+f(\yy)=f(t_{\yy}\xx)
\end{align}
for any $\xx$ and $\yy$ in the cone $\VV$ of rank $r$, $t_{\yy}\in\TT$, where $\TT$ is the triangular group with respect to the Jordan frame $\left(\cb_i\right)_{i=1}^r$. Then there exist generalized logarithmic functions $H_1,\ldots, H_r$ such that for any $\xx\in\VV$,
\begin{align*}
f(\xx)=\sum_{k=1}^r H_k(\Delta_k(\xx)),
\end{align*}
where $\Delta_k$ is the principal minor of order $k$ with respect to $\left(\cb_i\right)_{i=1}^r$.
\end{theorem}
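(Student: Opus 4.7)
The plan is to reformulate the functional equation as a homomorphism from the triangular group $\TT$ and then exploit its semidirect structure. Setting $\yy=\ee$ in \eqref{mainriesz} gives $f(\ee)=0$. Since $\xx\mapsto t_\xx$ is a bijection $\VV\to\TT$, I define $\phi:\TT\to\RR$ by $\phi(t_\xx):=f(\xx)$. The equation \eqref{mainriesz} then reads $\phi(t_\xx)+\phi(t_\yy)=\phi(t_\yy t_\xx)$, so $\phi$ is a group homomorphism into $(\RR,+)$. Since the target is abelian, $\phi$ automatically vanishes on the commutator subgroup $[\TT,\TT]$.

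The core step is to show that the ``unipotent radical'' $N$ of $\TT$, generated by the Frobenius transformations $\tau_{\cb_i}(\zz)$ with $1\leq i\leq r-1$ and $\zz\in\bigoplus_{j>i}\En_{ij}$, is contained in $[\TT,\TT]$. For this I compute, for $a=\PP(\sum_k\alpha_k\cb_k)$ with $\alpha_k>0$,
\[
a\,\tau_{\cb_i}(\zz)\,a^{-1}=\tau_{\cb_i}\!\Big(\sum_{j>i}\tfrac{\alpha_j}{\alpha_i}\,\zz_j\Big),\qquad \zz=\sum_{j>i}\zz_j,\ \zz_j\in\En_{ij},
\]
using that $\PP(\sum_k\alpha_k\cb_k)$ acts on $\En_{jk}$ by the scalar $\alpha_j\alpha_k$ together with the Peirce product rules and formula \eqref{taucb}. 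Combined with the additivity of $\zz\mapsto\tau_{\cb_i}(\zz)$ on $\bigoplus_{j>i}\En_{ij}$ (which stems from $\En_{ij}\cdot\En_{ik}\subseteq\En_{jk}$ and the nilpotency of $2\zz\Box\cb_i$), this yields $[a,\tau_{\cb_i}(\zz)]=\tau_{\cb_i}\bigl(\sum_{j>i}(\alpha_j/\alpha_i-1)\zz_j\bigr)$. Choosing $\alpha_j\neq\alpha_i$ for each $j>i$, every element $\tau_{\cb_i}(\ww)$ arises as such a commutator, whence $N\subseteq[\TT,\TT]$ and $\phi|_N=0$.

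By the unique factorisation $t=n\cdot a$ with $n\in N$ and $a=\PP(\sum_k\alpha_k\cb_k)$ from \eqref{triangular}, we get $\phi(t)=\phi(a)$. The map $(\alpha_1,\ldots,\alpha_r)\mapsto\PP(\sum_k\alpha_k\cb_k)$ is a group isomorphism $(\RR_+^r,\cdot)\to A$ (faithfulness of the $\TT$-action on $\ee$ lets one deduce $\PP(\sum_k\alpha_k\cb_k)\PP(\sum_k\beta_k\cb_k)=\PP(\sum_k\alpha_k\beta_k\cb_k)$ from evaluating on $\ee$), so $\phi|_A$ is a multivariate logarithmic function. The standard splitting trick, with $h_j(\alpha):=\phi(\PP(\alpha\cb_j+\cb_j^\bot))$, gives $\phi|_A(\PP(\sum_k\alpha_k\cb_k))=\sum_{k=1}^r h_k(\alpha_k)$ with each $h_k$ logarithmic.

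Finally, identities \eqref{deltast} and \eqref{deltastau} imply $\Delta_k(t_\xx\ee)=\alpha_1^2\cdots\alpha_k^2$, so $\alpha_k^2=\Delta_k(\xx)/\Delta_{k-1}(\xx)$ (with $\Delta_0:=1$). Writing $\tilde h_k(\gamma):=h_k(\sqrt\gamma)$ (still logarithmic) and telescoping,
\[
f(\xx)=\sum_{k=1}^r h_k(\alpha_k)=\sum_{k=1}^r\bigl[\tilde h_k(\Delta_k(\xx))-\tilde h_k(\Delta_{k-1}(\xx))\bigr]=\sum_{k=1}^r H_k(\Delta_k(\xx)),
\]
where $H_r:=\tilde h_r$ and $H_k:=\tilde h_k-\tilde h_{k+1}$ for $k<r$, each logarithmic as a difference of logarithmic functions. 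The main obstacle is the second step: the conjugation formula is immediate in $S_r(\RR)$ from Frobenius matrix multiplication, but verifying it in the abstract Jordan framework requires careful Peirce bookkeeping to ensure that $a\,\tau_{\cb_i}(\zz)\,a^{-1}$ remains a Frobenius transformation at the \emph{same} idempotent $\cb_i$ with the explicit rescaling of Peirce components displayed above.
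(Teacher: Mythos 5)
Your proof is correct, and it reaches the conclusion by a genuinely different, more structural route than the paper's. The paper only observes the connection to characters of $\TT$ in a remark \emph{after} its proof; in the proof itself it works directly with $f$ on the cone: it writes $f(\xx)=\sum_k H_k(\Delta_k(\xx))+\sum_j f(\tau_{\cb_j}(\zz^{(j)})\ee)$, shows that $\alpha^{(j)}\mapsto f(\tau_{\cb_j}(\zz^{(j)}(\alpha^{(j)}))\ee)$ is additive and hence splits into additive functions $\Lambda_k^{(j)}$ of the single coordinates, and then kills each $\Lambda_k^{(j)}$ by conjugating the single generator $\tau_{\cb_j}(\zz_{jk})$ by $t_\xx=\PP(\alpha^{-1}\cb_j+\cb_j^\bot)$: this shows $\Lambda_k^{(j)}$ is constant, and an additive constant function vanishes. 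Your commutator-subgroup argument replaces the ``additive and constant, hence zero'' step by the cleaner observation that a homomorphism into an abelian group kills $[\TT,\TT]\supseteq N$; the price is that you need the conjugation formula for a general diagonal $a=\PP(\sum_k\alpha_k\cb_k)$ and a general $\zz\in\bigoplus_{j>i}\En_{ij}$, whereas the paper only ever conjugates a single Peirce component $\zz_{jk}$ by a $\PP$ supported on one idempotent, where everything follows from Lemma \ref{lema2}. Your conjugation formula is correct; the least painful way to certify it in the abstract setting is the structure-group identity $g\,(\ub\Box\vb)\,g^{-1}=(g\ub)\Box\bigl((g^*)^{-1}\vb\bigr)$ applied with the self-adjoint $g=a$, combined with the facts that $a$ acts on $\En_{ij}$ as $\alpha_i\alpha_j\,\mathrm{Id}$, that $a^{-1}\cb_i=\alpha_i^{-2}\cb_i$, and with bilinearity of $\Box$; a bare-hands verification from \eqref{taucb} and the Peirce multiplication table also works but is exactly the bookkeeping you flag. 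The remaining steps (bijectivity of $\xx\mapsto t_\xx$ and well-definedness of $\phi$, the splitting of $\phi$ on the diagonal subgroup, the identification $\alpha_k^2=\Delta_k(\xx)/\Delta_{k-1}(\xx)$ via \eqref{deltast}--\eqref{deltastau}, and the telescoping that produces $H_k=\tilde h_k-\tilde h_{k+1}$, $H_r=\tilde h_r$) agree with the paper's computations and are sound.
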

\begin{proof}
Let $(\cb_i)_{i=1}^r$ be a complete system of primitive orthogonal idempotents corresponding to $\TT$. Since $t_\xx=\PP(\xx^{1/2})$ for elements of the form $\xx=\sum_{i=1}^r \alpha_i \cb_i$, as in the previous proof, \eqref{start} is valid. Thus
\begin{align*}
f\left(\sum_{i=1}^r \alpha_i \cb_i\right)=\sum_{i=1}^r H_{\cb_i}(\alpha_i)
\end{align*}
for $\alpha_i>0$, where $H_{\cb_i}(\alpha):=f(\alpha\cb_{i}+\cbo_{i})$, $i=1,\ldots,r$, are generalized logarithmic functions. 
Denote $H_i\equiv H_{\cb_i}-H_{\cb_{i+1}}$, $i=1,\ldots,r-1$ and $H_r\equiv H_{\cb_r}$. Then $H_{\cb_i}\equiv\sum_{k=i}^r H_k$, so
\begin{align*}f\left(\sum_{i=1}^r \alpha_i \cb_i\right)=\sum_{i=1}^r \sum_{k=i}^r H_k(\alpha_i)=\sum_{k=1}^r\sum_{i=1}^kH_k(\alpha_i)=\sum_{k=1}^r H_k\left(\prod_{i=1}^k\alpha_i\right).
\end{align*}

Take now any $\xx\in\VV$. By \eqref{triangular} there exist elements $\zz^{(j)}=\sum_{k=j+1}^r \zz_{kj}$, $\zz_{kj} \in \En_{jk}, 1\leq j\leq r-1$, and positive numbers $\alpha_1, \ldots ,\alpha_r$ such that the triangular decomposition reads
\begin{align*}
\xx=\tau_{\cb_1}(\zz^{(1)})\tau_{\cb_2}(\zz^{(2)})\ldots\tau_{\cb_{r-1}}(\zz^{(r-1)})\left(\sum_{k=1}^r \alpha_k \cb_k \right).
\end{align*}
By \eqref{deltast} we have inductively
\begin{align*}
\Delta_j(\xx)&=\Delta_j\left(\tau_{\cb_1}(\zz^{(1)})\ee\right)\Delta_j\left(\tau_{\cb_2}(\zz^{(2)})\ldots\tau_{\cb_{r-1}}(\zz^{(r-1)})\left(\sum_{k=1}^r \alpha_k \cb_k \right)\right) \\
& =\Delta_j\left(\sum_{k=1}^r \alpha_k \cb_k \right)\prod_{i=1}^{r-1}\Delta_j\left(\tau_{\cb_i}(\zz^{(i)})\ee\right).
\end{align*}
Recall that $\Delta_j\left(\sum_{k=1}^r \alpha_k \cb_k \right)=\prod_{k=1}^j\alpha_k$ and by \eqref{deltastau} we have $\Delta_j\left(\tau_{\cb_i}(\zz^{(i)})\ee\right)=1$ for any $i=1,\ldots,r-1$. Thus 
$$\Delta_j(\xx)=\prod_{k=1}^j\alpha_k.$$
Using \eqref{mainriesz} we arrive at
\begin{align*}
f(\xx)=f\left(\sum_{k=1}^r \alpha_k\cb_k\right)+\sum_{j=1}^{r-1} f\left(\tau_{\cb_j}(\zz^{(j)})\ee\right)=\sum_{k=1}^r H_k(\Delta_k(\xx))+\sum_{j=1}^{r-1} f\left(\tau_{\cb_j}(\zz^{(j)})\ee\right)
\end{align*}
for $\zz^{(j)}\in \bigoplus_{k=j+1}^r \En_{jk}$. Our aim is to show that $f\left(\tau(\zz^{(j)})\ee\right)=0$ for each $1\leq j\leq r-1$. 

For $\alpha^{(j)}=(\alpha_{j+1},\ldots,\alpha_r)\in\RR^{r-j}$ define $\zz^{(j)}(\alpha^{(j)})\in\bigoplus_{k=j+1}^r \En_{jk}$ by
\begin{align*}
\zz^{(j)}(\alpha^{(j)})=\sum_{k=j+1}^r \alpha_k \zz_{jk},
\end{align*}
where $\zz_{ij}$ are the fixed elements from the triangular decomposition of $\xx$. 
Since (it can be quickly shown using \eqref{taucb})
\begin{align*}
\tau_{\cb_j}\left(\zz^{(j)}(\alpha^{(j)})\right)\tau_{\cb_j}\left(\zz^{(j)}(\beta^{(j)})\right)=\exp\left(2\zz^{(j)}(\alpha^{(j)}+\beta^{(j)})\Box\cb_j\right)=\tau_{\cb_j}\left(\zz^{(j)}(\alpha^{(j)}+\beta^{(j)})\right)
\end{align*}
we have
\begin{align*}
f\left(\tau_{\cb_j}\left(\zz^{(j)}(\alpha^{(j)})\right)\ee\right)+f\left(\tau_{\cb_j}\left(\zz^{(j)}(\beta^{(j)})\right)\ee\right)=f\left(\tau_{\cb_j}\left(\zz^{(j)}(\alpha^{(j)}+\beta^{(j)})\right)\ee\right).
\end{align*}
Function $\RR^{r-j}\ni \alpha^{(j)} \mapsto f\left(\tau_{\cb_j}\left(\zz^{(j)}(\alpha^{(j)})\right)\ee\right)\in\RR$ is therefore additive, hence,
\begin{align}\label{ckj}
f\left(\tau_{{\cb_j}}\left(\zz^{(j)}(\alpha^{(j)})\right)\ee\right)=\sum_{k=j+1}^r \Lambda^{(j)}_k (\alpha_k),
\end{align}
where $\Lambda^{(j)}_k$, $j+1\leq k\leq r$, are additive functions on $\RR$. We will show that $\Lambda^{(j)}_k\equiv 0$ for any $1\leq j\leq r-1$, $j+1\leq k\leq r$.

Put now $\xx =\tfrac{1}{\alpha^2}\cb_j+\cb_j^\bot$ for $\alpha>0$. 
Then by \eqref{mainriesz} we have for $1\leq j\leq r-1$, $j+1\leq k\leq r$
\begin{align*}
f\left(t_\xx\tau_{\cb_j}(\zz_{jk})\xx^{-1}\right)=f\left(t_\xx\ee\right)+f\left(\tau_{\cb_j}(\zz_{jk})\ee\right)+f(\xx^{-1}).
\end{align*}
Since $t_\xx\ee=\xx$ and $f(\xx)+f(\xx^{-1})=f(\ee)=0$ we arrive at
\begin{align*}
f\left(\tau_{\cb_j}(\zz_{jk})\ee\right)=f\left(t_\xx\tau_{\cb_j}(\zz_{jk})\xx^{-1}\right).
\end{align*}
We will compute the argument of $f$ on the right hand side of the above equation. By Lemma \ref{lema2} {$(i)$} we obtain
\begin{align*}
\tau_{\cb_j}(\zz_{jk})\xx^{-1} & =\tau_{\cb_j}(\zz_{jk})(\ee+(\alpha^2-1)\cb_j)=\ee+\zz_{jk}+\tfrac{\norm{\zz_{jk}}^2}{2}\cb_k+(\alpha^2-1)(\cb_j+\zz_{jk}+\tfrac{\norm{\zz_{jk}}^2}{2}\cb_k) \\
 & =\xx^{-1}+\alpha^2\tfrac{\norm{\zz_{jk}}^2}{2}\cb_k+\alpha^2\zz_{jk}.
\end{align*}
Note that in this case we have $t_\xx=\PP\left(\xx^{1/2}\right)$. Thus by Lemma \ref{lema2} {$(ii)$} we get $\PP\left(\xx^{1/2}\right)\zz_{jk}=\tfrac{1}{\alpha}\zz_{jk}$ and
\begin{align*}
\PP\left(\xx^{1/2}\right)\tau_{\cb_j}(\zz_{jk})\xx^{-1}=\ee+\alpha^2\tfrac{\norm{\zz_{jk}}^2}{2}\cb_k+\alpha\zz_{jk}.
\end{align*}
Again, by {Lemma} \ref{lema2} {$(i)$}, it is clear that $t_\xx\tau_{\cb_j}(\zz_{jk})\xx^{-1}=\tau_{\cb_j}(\alpha\zz_{jk})\ee$,
what implies that
\begin{align*}
f\left(\tau_{\cb_j}(\zz_{jk})\ee\right)=f\left(\tau_{\cb_j}(\alpha\zz_{jk})\ee\right).
\end{align*}
But
\begin{align*}
\alpha\zz_{jk}=\zz^{(j)}(0,\ldots,0,\underbrace{\alpha}_{k},0,\ldots,0),
\end{align*}
hence, by \eqref{ckj}, we obtain for any $\alpha>0$,
\begin{align*}
\Lambda^{(j)}_k(1)=\Lambda^{(j)}_k(\alpha).
\end{align*}
$\Lambda^{(j)}_k$ is additive and constant, therefore $\Lambda^{(j)}_k\equiv 0$, what completes the proof.
\end{proof}

\begin{remark}\label{character}
Note that equation \eqref{mainriesz} may be rewritten in the form
\begin{align*}
h(s)h(t)=h(s t),\quad(s,t)\in\TT^2,
\end{align*}
where $h(t)=\exp f(t\ee)$ for $t\in\TT$. This way we obtain the form of all real characters of triangular group {(see also \citet{GG2000})}:
\begin{align*}
h(t)=\prod_{k=1}^r M_k(\Delta_k(t\ee)),
\end{align*}
where $M_k$, $k=1,\ldots,r$, are generalized multiplicative functions ($M_k(ab)=M_k(a)M_k(b)$, $a,b>0$).
\end{remark}

\begin{remark}
If we impose on $f$ some mild conditions (eg. measurability) in Theorem \ref{w2th}, there exists $s\in\RR^r$ such that
$$f(\xx)=\log\Delta_s(\xx)$$
for any $\xx\in\VV$.
\end{remark}

{In general we don't know the general form of $w$-logarithmic functions for any multiplication algorithm $w$. But if we assume additionally that $w$-logarithmic function is $K$-invariant ($f(x) = f(kx)$ for any $k\in K$ and $\xx\in\VV$), then we obtain following result.}
\begin{theorem}\label{XXX}
Let $f\colon\VV\to\RR$ be a function satisfying
\begin{align*}
f(\xx)+f(w(\ee)\yy)=f(w(\xx)\yy),\quad(\xx,\yy)\in\VV^2,
\end{align*}
where $w$ is a multiplication algorithm.
Assume additionally that $f$ is $K$-invariant. Then there exists a logarithmic function $H$ such that
$$f(\xx)=H(\det\xx)$$
for any $\xx\in\VV$.
\end{theorem}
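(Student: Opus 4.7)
The plan is to reduce Theorem \ref{XXX} to Theorem \ref{detwth}: the hypothesis of $K$-invariance should let me replace the arbitrary multiplication algorithm $w$ by the canonical one $w_1(\xx)=\PP(\xx^{1/2})$, after which the conclusion is immediate.

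First I would note that $w(\ee)\in K$. Indeed, $w(\ee)\in G$ by the definition of a multiplication algorithm, $w(\ee)\ee=\ee$, and by the characterization $K=\{k\in G\colon k\ee=\ee\}$ recalled in Section~2, this forces $w(\ee)\in K$. Hence $K$-invariance gives $f(w(\ee)\yy)=f(\yy)$, and the functional equation simplifies to
\begin{align*}
f(\xx)+f(\yy)=f(w(\xx)\yy),\quad(\xx,\yy)\in\VV^2.
\end{align*}

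For the second step, fix $\xx\in\VV$ and observe that $\PP(\xx^{1/2})\ee=\xx=w(\xx)\ee$. Therefore the element
\begin{align*}
k_\xx:=\PP(\xx^{-1/2})\,w(\xx)=\PP(\xx^{1/2})^{-1}w(\xx)
\end{align*}
lies in $G$ and fixes $\ee$, whence $k_\xx\in K$ and $w(\xx)=\PP(\xx^{1/2})\,k_\xx$. Substituting $\yy\mapsto k_\xx^{-1}\yy$ in the displayed equation and invoking $K$-invariance once more on the left-hand side gives
\begin{align*}
f(\xx)+f(\yy)=f\!\left(\PP(\xx^{1/2})\,k_\xx k_\xx^{-1}\yy\right)=f\!\left(\PP(\xx^{1/2})\yy\right),\quad(\xx,\yy)\in\VV^2.
\end{align*}
This is precisely the hypothesis of Theorem \ref{detwth}, which yields $f(\xx)=H(\det\xx)$ for some logarithmic $H$.

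No serious obstacle is anticipated: the whole reduction rests on the elementary observation that any two multiplication algorithms differ fibrewise by a $K$-valued factor, so that $K$-invariance of $f$ makes the choice of $w$ irrelevant and brings us back to the case of $w_1$ already handled in Theorem \ref{detwth}. The only point requiring mild care is verifying that $\PP(\xx^{-1/2})=\PP(\xx^{1/2})^{-1}$ lies in $G$ and that the stabilizer description $K=\{k\in G\colon k\ee=\ee\}$ is available, both of which are recorded in the preliminaries.
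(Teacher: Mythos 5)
Your proof is correct and follows essentially the same route as the paper: write $w(\xx)=\PP(\xx^{1/2})k_\xx$ with $k_\xx\in K$, use $K$-invariance to absorb the $K$-factors, and reduce to Theorem \ref{detwth}. The only cosmetic difference is that you obtain $k_\xx\in K$ directly from the stabilizer description $K=\{k\in G\colon k\ee=\ee\}$, whereas the paper invokes the polar decomposition of $G$; both are legitimate and lead to the identical reduction.
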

\begin{proof}
For any $g\in G$ there exists its polar decompositions, ie. there exist $\zz\in\VV$ and $k\in K$ such that $g=\PP(\zz)k$ (see \cite[Theorem III.5.1]{FaKo1994}). For $g=w(\xx)$ we obtain identity
$w(\xx)=\PP(\zz_\xx)k_\xx$ for $\zz_\xx\in\VV$ and $k_\xx\in K$. Since $w(\xx)\ee=\xx$, we have $\zz_\xx=\xx^{1/2}$.
Thus
\begin{align*}
f(\xx)+f(w(\ee)\yy)=f(w(\xx)\yy)=f(\PP(\xx^{1/2})k_\xx\yy),\quad (\xx,\yy)\in\VV^2.
\end{align*}
Taking $\yy=k_\xx^{-1}\ub$ we arrive at
\begin{align*}
f(\xx)+f(w(\ee) k_\xx^{-1}\ub)=f(\PP(\xx^{1/2})\ub),\quad (\xx,\ub)\in\VV^2.
\end{align*}
But $w(\ee) k_\xx^{-1}\in K$, hence $f(w(\ee) k_\xx^{-1}\ub)=f(\ub)$. This means that functions $f$ is $w_1$-logarithmic. By Theorem \ref{detwth} it then follows that there exists logarithmic function $H$ such that $f(\xx)=H(\det\xx)$ for any $\xx\in\VV$.
\end{proof}

At the end we would like to comment on the form of $w$-logarithmic functions (equation \eqref{wC2}). It turns out that this is more natural form then the following one:
\begin{align*}
f(\xx)+f(\yy)=f(w(\xx)\yy),\quad (\xx,\yy)\in\VV^2,
\end{align*}
what is supported by the following Lemma.

\begin{lemma}[$w$-logarithmic Pexider functional equation]\label{lem1}
Assume that real functions $a$, $b$, $c$ are defined on the cone $\Omega$ and satisfy following functional equation
\begin{align}\label{PexwC}
a(\xx)+b(\yy)=c(w(\xx)\yy),\quad (\xx,\yy)\in\VV^2,
\end{align}
where $w$ is a multiplication algorithm.
Then there exist $w$-logarithmic function $f$ and real constants $a_0$, $b_0$ such that
\begin{align*}
a(\xx) & =f(\xx)+a_0,\\
b(\xx) & =f(w(\ee)\xx)+b_0,\\
c(\xx) & =f(\xx)+a_0+b_0.
\end{align*}
\end{lemma}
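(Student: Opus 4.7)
The plan is to carry out a standard Pexider-type reduction: specialize one variable at a time to $\ee$ in order to express $a$, $b$, $c$ in terms of a single unknown function, then substitute back to verify that this unknown function is $w$-logarithmic.

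First I would put $\yy=\ee$ in \eqref{PexwC}. Since $w(\xx)\ee=\xx$ for any multiplication algorithm, this yields
\begin{align*}
c(\xx) = a(\xx) + b(\ee),\quad \xx\in\VV,
\end{align*}
so $c$ is determined by $a$ up to the additive constant $b_0:=b(\ee)$. Next I would put $\xx=\ee$ in \eqref{PexwC}, obtaining
\begin{align*}
c(w(\ee)\yy) = b(\yy) + a(\ee),\quad \yy\in\VV,
\end{align*}
which expresses $b$ in terms of $c$ (hence in terms of $a$) composed with $w(\ee)$, again up to an additive constant $a_0:=a(\ee)$.

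Then I would define $f\colon\VV\to\RR$ by $f(\xx):=a(\xx)-a_0$. The two identities above immediately give
\begin{align*}
a(\xx)=f(\xx)+a_0,\qquad c(\xx)=f(\xx)+a_0+b_0,\qquad b(\yy)=f(w(\ee)\yy)+b_0,
\end{align*}
which are precisely the formulas claimed. It remains to check that $f$ is $w$-logarithmic, that is, that $f$ satisfies \eqref{wC2}. Substituting the three formulas back into \eqref{PexwC} gives
\begin{align*}
(f(\xx)+a_0)+(f(w(\ee)\yy)+b_0)=f(w(\xx)\yy)+a_0+b_0,
\end{align*}
and cancelling the constants yields $f(\xx)+f(w(\ee)\yy)=f(w(\xx)\yy)$, as required.

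There is no real obstacle here beyond bookkeeping; the only potentially subtle point is that one must use the defining property $w(\xx)\ee=\xx$ of a multiplication algorithm (both with $\xx$ arbitrary and with $\xx=\ee$) to ensure that the two specializations of $(\xx,\yy)$ in \eqref{PexwC} land on the correct arguments of $a$, $b$, $c$, so that the extracted additive constants $a_0$ and $b_0$ are well-defined numbers and not hidden functions. Once this is done, the verification of the $w$-logarithmic property of $f$ is a direct algebraic substitution.
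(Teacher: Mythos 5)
Your proposal is correct and follows essentially the same route as the paper's proof: specialize $\yy=\ee$ and then $\xx=\ee$ to extract the constants $b_0=b(\ee)$ and $a_0=a(\ee)=c(\ee)-b_0$, and your $f(\xx)=a(\xx)-a_0$ coincides with the paper's $f(\xx)=c(\xx)-a_0-b_0$ since $c=a+b_0$. The back-substitution verifying that $f$ is $w$-logarithmic is the same bookkeeping as in the paper.
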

\begin{proof}
Inserting into \eqref{PexwC} $\yy=\ee$ and defining $b_0=b(\ee)$ we obtain for $\xx\in\VV$,
\begin{align*}
	a(\xx)=c(\xx)-b_0.
\end{align*}
Inserting it back to \eqref{PexwC} and taking $\xx=\ee$, for $a_0=c(\ee)-b_0$ we have
\begin{align*}
b(\yy)=c(w(\ee)\yy)-a_0.
\end{align*}
Defining $f(\xx)=c(\xx)-a_0-b_0$, we finally get
\begin{align*}
f(\xx)+f(w(\ee)\yy)=f(w(\xx)\yy),\quad (\xx,\yy)\in\VV^2.
\end{align*}
\end{proof}

\section{{Multiplicative functions on the Lorentz Cone}}
Since the Lorentz cone is less known than its matrix colleagues, in this section we will formulate Theorems \ref{detwth} and \ref{w2th} using the language of the Lorentz cone framework. 
One of the main pleasant properties of Lorentz cone is that both multiplication algorithm may be given by explicit formulas. 

Recall that the fifth kind of simple Euclidean Jordan algebra is $\En_L=\RR\times \RR^n$, $n\geq2$. It is convenient to denote elements of $\En_L$ by $\xx=(x_0,x)$, where $x_0\in\RR$ and $x\in\RR^n$. $\En_L$ is endowed with Jordan product \eqref{scL}, which may be written as
\begin{align*}
\xx\yy=(\scalar{\xx,\yy},x_0y+y_0x)\in\RR\times\RR^n,
\end{align*}
where $\scalar{\xx,\yy}=x_0y_0+\scalar{x,y}_n$ and $\scalar{x,y}_n=\sum_{i=1}^n x_iy_i$ is an inner product of $\RR^n$. The norm defined by $\scalar{\cdot,\cdot}_n$ is denoted by $\norm{\cdot}_n$. The neutral element of $\En_L$ is denoted by $\ee=(1,\emptyset)$, where $\emptyset$ is the zero of $\RR^n$.

It can be shown that (see for example \citet{Wes2007L})
\begin{align*}
\PP(\xx^{1/2})\yy=\left(\scalar{\xx,\yy},\sqrt{\det\xx}\,\,y+(y_0+\frac{\scalar{x,y}_n}{x_0+\sqrt{\det\xx}})x\right),
\end{align*}
where the determinant of elements from $\En_L$ takes the form $\det\xx=x_0^2-\norm{x}_n^2$.
The symmetric cone corresponding to $\En_L$ is called the Lorentz cone and is given by $\VV_L=\left\{\xx\in\En_L\colon x_0>\norm{x}_n\right\}$. Following result was previously proved using direct calculations on the Lorentz cone in \citet{Wes2007L}. 
\begin{corollary}[$w_1$-logarithmic Cauchy functional equation on the Lorentz cone]
Let $f\colon \VV_L\to\RR$ be a function such that
\begin{align*}
f(x_0,x)+f(y_0,y)=f\left(\scalar{\xx,\yy},\sqrt{\det\xx}\,\,y+\left(y_0+\frac{\scalar{x,y}_n}{x_0+\sqrt{\det\xx}}\right)x\right), 
\end{align*}
for any $\xx=(x_0,x),\yy=(y_0,y)\in\VV_L$.
Then there exists a logarithmic function $H$ such that
\begin{align*}
f(\xx)=H\left(x_0^2-\norm{x}_n^2\right)
\end{align*}
for all $\xx=(x_0,x)\in\VV_L$.
\end{corollary}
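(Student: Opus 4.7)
The plan is to observe that this corollary is a direct specialization of Theorem \ref{detwth} to the symmetric cone attached to the fifth simple Euclidean Jordan algebra, so that essentially no new work is needed beyond matching notation. First I would recall, from the preliminaries and the opening of this section, that $\VV_L$ is the symmetric cone of $\En_L = \RR \times \RR^n$ equipped with the Jordan product \eqref{scL}, and that the Jordan-algebra determinant on $\En_L$ is given by $\det\xx = x_0^2 - \norm{x}_n^2$.

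Next I would identify the right-hand side of the hypothesized functional equation with $f(\PP(\xx^{1/2})\yy)$. This is immediate from the explicit formula
$$\PP(\xx^{1/2})\yy = \left(\scalar{\xx,\yy},\ \sqrt{\det\xx}\,y + \left(y_0 + \frac{\scalar{x,y}_n}{x_0 + \sqrt{\det\xx}}\right) x\right)$$
recalled just above the corollary. Under this identification the hypothesis becomes literally \eqref{detP} in the particular simple Jordan algebra $\En_L$, and Theorem \ref{detwth} then furnishes a generalized logarithmic function $H$ with $f(\xx) = H(\det \xx) = H\!\left(x_0^2 - \norm{x}_n^2\right)$, which is exactly the claimed conclusion.

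I do not foresee any real obstacle. The rank of $\En_L$ is $2$, and the one delicate ingredient in the proof of Theorem \ref{detwth}, namely Lemma \ref{cij}, only requires the existence of two distinct non-orthogonal primitive idempotents; this holds in abundance on $\VV_L$, where the primitive idempotents are exactly the elements $\tfrac{1}{2}(1,u)$ with $\norm{u}_n = 1$, so any two such idempotents coming from non-antipodal unit vectors are non-orthogonal. The only sanity check worth performing explicitly is that the quadratic-representation formula quoted in this section does agree with the abstract $\PP(\xx^{1/2})$ coming from \eqref{defP} applied to the Jordan product \eqref{scL}; this is a short direct computation (alternatively, one may simply cite \citet{Wes2007L}, where it is carried out in detail).
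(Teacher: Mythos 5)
Your proposal is correct and matches the paper's treatment exactly: the corollary is stated as an immediate specialization of Theorem \ref{detwth} to the Lorentz cone, using the explicit formula for $\PP(\xx^{1/2})\yy$ and $\det\xx=x_0^2-\norm{x}_n^2$ recalled just before it. Your extra sanity checks (non-orthogonality of idempotents from non-antipodal unit vectors, agreement of the quadratic-representation formula with \eqref{defP}) are accurate but not needed beyond what the paper already provides.
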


We are going now to give the specification of triangular group $\TT$ to Lorentz cone case. Rank $r$ of $\VV_L$ is $2$ and any idempotent on $\En_L$ is of the form $\cb_u=\tfrac12(1,u)$ for some $u\in\RR^n$ with $\norm{u}_n=1$. Thus any Jordan frame consists of two elements $(\cb_1,\cb_2)=(\cb_u,\cbo_u)$, where $\cbo_u=\ee-\cb_u$. Recall that $\En_{12} = \En_L\left(\cb_1,\tfrac{1}{2}\right) \cap \En_L\left(\cb_2,\tfrac{1}{2}\right)$ consists of elements $\zz=(z_0,z)$ such that
\begin{align*}
\tfrac{1}{2}\zz & =\LL(\cb_1)\zz =\cb_u\zz,\\
\tfrac{1}{2}\zz & =\LL(\cb_2)\zz =(\ee-\cb_u)\zz.
\end{align*}
It may by easily shown that $\cb_u\zz=\tfrac{1}{2}\zz$ if and only if $z_0=0$ and $\scalar{z,u}_n=0$.
Thus 
\begin{align*}
\En_L\left(\cb_1,\tfrac{1}{2}\right) \cap \En_L\left(\cb_2,\tfrac{1}{2}\right)=\En_L\left(\cb_u,\tfrac{1}{2}\right)=\left\{\zz=(0,z)\in\En_L\colon \scalar{z,u}_n=0 \right\}.
\end{align*}
We are ready to define the triangular group $\TT_L$ with respect to fixed Jordan frame $(\cb_u,\cbo_u)$:
\begin{align*}
\TT_L=\left\{\tau_{\cb_u}(\zz)\PP\left(\alpha_1\cb_u+\alpha_2\cbo_u\right)\colon \alpha_1, \alpha_2>0, z_0=0, \scalar{z,u}_n=0\right\}.
\end{align*}
Note that $\TT_L$ is parametrized by $u\in\RR^n$ and this is equivalent to the choice of basis for Cholesky decomposition of matrices. 

In order to define multiplication algorithm $w_2$ on the Lorentz cone we have to find the unique $t_\yy\in\TT_L$ such that $t_\yy\ee=\yy\in\VV_L$. Using formulas obtained in Lemma \ref{lema2} $(i)$, it can by shown that 
$\tau_{\cb_u}(\zz)\PP(\alpha_1\cb_u+\alpha_2\cbo_u)\ee=\yy\in\VV_L$ if and only if
\begin{align}\label{azz}\begin{split}
\alpha_1^2 & =y_0+\scalar{y,u}_n,\\
\alpha_2^2 & =\frac{\det\yy}{y_0+\scalar{y,u}_n},\\
\zz & =(0,z)=\left(0,\frac{y-\scalar{y,u}_nu}{y_0+\scalar{y,u}_n}\right).
\end{split}\end{align}
Note that $\alpha_1^2=y_0+\scalar{y,u}_n=\Delta_1(\yy)$, where $\Delta_1$ is the principal minor of order 1 with respect to the Jordan frame $(\cb_u,\cbo_u)$.
The multiplication algorithm $w_2$ is then defined by
\begin{align*}
w_2(\yy)\xx=t_\yy\xx=\tau_{\cb_u}(\zz)\PP(\alpha_1\cb_u+\alpha_2\cbo_u)\xx,
\end{align*}
where $\alpha_1, \alpha_2>0$ and $\zz$ are as in \eqref{azz}. Carefully using \citet[VI.3.1]{FaKo1994} it can be shown that
\begin{align*}
t_\yy\xx=\sqrt{\det\yy}\,\,\xx+ \Delta_1(\xx)\yy-\sqrt{\det\yy}\,\,\Delta_1(\xx)\cb_u+h_u(\xx,\yy)\cbo_u,
\end{align*}
where $h_u(\xx,\yy)=\frac{2\sqrt{\det\yy}}{\Delta_1(\yy)}\left(\scalar{x,y}_n-\scalar{x,u}_n\scalar{y,u}_n\right)-\left(1-\frac{2 \det\yy}{\Delta_1(\yy)}\right)\scalar{x,u}_n-x_0$.
Thus, by Theorem \ref{w2th} we obtain the following
\begin{corollary}[$w_2$-logarithmic Cauchy functional equation on the Lorentz cone]
Let $f\colon \VV_L\to\RR$ be a function such that
\begin{align*}
f(\xx)+f(\yy)=f\left(\sqrt{\det\yy}\,\,\xx+ \Delta_1(\xx)\yy-\sqrt{\det\yy}\,\,\Delta_1(\xx)\cb_u+h_u(\xx,\yy)\cbo_u\right)
\end{align*}
for any $(\xx,\yy)\in\VV_L^2$, $t_\yy\in\TT_L$, where $\TT_L$ is the triangular group with respect to the Jordan frame $(\cb_u,\cbo_u)$ and $\norm{u}_n=1$.
Then there exist logarithmic functions $H_1$ and $H_2$ such that for all $\xx=(x_0,x)\in\VV_L$,
\begin{align*}
f(\xx)=H_1(x_0+\scalar{x,u}_n)+H_2(x_0^2-\norm{x}_n^2).
\end{align*}
\end{corollary}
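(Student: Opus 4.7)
The statement is a direct specialization of Theorem \ref{w2th} to the rank-$2$ case $\En = \En_L$, so the proof is essentially one of translation rather than new work. The plan has three steps: identify the right-hand side of the functional equation with $t_\yy\xx$ for the Lorentz triangular group $\TT_L$, apply Theorem \ref{w2th}, and then compute the principal minors $\Delta_1,\Delta_2$ explicitly on the Lorentz cone.

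For the first step, the paragraph preceding the corollary already carries out the computation: starting from the parametrization \eqref{azz} of $t_\yy\in\TT_L$ by $(\alpha_1,\alpha_2,\zz)$ and using Lemma \ref{lema2}$(i)$--$(ii)$ to compute $\tau_{\cb_u}(\zz)\PP(\alpha_1\cb_u+\alpha_2\cbo_u)\xx$, one arrives at the identity
\[
t_\yy\xx = \sqrt{\det\yy}\,\xx + \Delta_1(\xx)\yy - \sqrt{\det\yy}\,\Delta_1(\xx)\cb_u + h_u(\xx,\yy)\cbo_u.
\]
Consequently the hypothesis of the corollary reads precisely as $f(\xx)+f(\yy)=f(t_\yy\xx)$, which is equation \eqref{mainriesz} of Theorem \ref{w2th} for the Jordan frame $(\cb_u,\cbo_u)$ of $\En_L$.

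For the second step, Theorem \ref{w2th} immediately yields generalized logarithmic functions $H_1,H_2$ with
\[ f(\xx) = H_1(\Delta_1(\xx)) + H_2(\Delta_2(\xx)), \]
where $\Delta_k$ are the principal minors relative to $(\cb_u,\cbo_u)$. The third step is to identify these minors. The identity $\Delta_1(\yy) = \alpha_1^2 = y_0 + \scalar{y,u}_n$ is already noted in \eqref{azz} (and can be verified in one line by computing $\PP(\cb_u)\xx = (x_0+\scalar{x,u}_n)\cb_u$ from the Lorentz Jordan product), so $\Delta_1(\xx) = x_0 + \scalar{x,u}_n$. Since $r=2$, we have $\Delta_2 = \det$, i.e.\ $\Delta_2(\xx) = x_0^2 - \norm{x}_n^2$. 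Substituting gives the claimed formula. There is no real obstacle: the only nontrivial ingredient, the identification of the explicit expression in the corollary with the abstract action $t_\yy\xx$, is provided by the derivation placed immediately before the statement, and everything else is bookkeeping.
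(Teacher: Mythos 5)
Your proposal is correct and follows essentially the same route as the paper: the corollary is obtained by identifying the explicit expression in the hypothesis with the abstract action $t_\yy\xx$ (which is exactly the computation the paper places immediately before the statement), invoking Theorem \ref{w2th} for the rank-$2$ Jordan frame $(\cb_u,\cbo_u)$, and then reading off $\Delta_1(\xx)=x_0+\scalar{x,u}_n$ from \eqref{azz} and $\Delta_2=\det$. No further comment is needed.
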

Note that although formula for $t_\yy\xx$ may seem very complicated, the situation for the Lorentz cone is far better than for matrix cones of size $n\times n$, where explicit formulation of Cholesky decomposition is much more complex and only recursive algorithms are at hand.

\subsection*{Acknowledgement} This research was partially supported by NCN grant No. 2012/05/B/ST1/00554. The author thanks J. Weso{\l}owski for helpful comments and discussions.

\bibliographystyle{plainnat}

\bibliography{Bibl}

\end{document}